\documentclass[preprint,12pt,authoryear]{elsarticle}

\usepackage{amsmath,amsfonts,amssymb,amsthm,mathrsfs,bbm}
\usepackage[colorlinks=true, linkcolor=blue, citecolor=blue, urlcolor=blue]{hyperref}
\usepackage{paralist}
\usepackage{enumitem}
\usepackage{textcase} 
\usepackage{titlesec} 
\usepackage{etoolbox} 
\usepackage{float}
\usepackage{graphics}
\usepackage{booktabs}
\usepackage{multirow}
\usepackage{xcolor}

\numberwithin{equation}{section}

\theoremstyle{plain}
\newtheorem{theorem}{Theorem}[section]

\newtheorem{lemma}[theorem]{Lemma}

\theoremstyle{definition}
\newtheorem{remark}[theorem]{Remark}

\newtheorem{assumption}[theorem]{Assumption}

\begin{document}

\begin{frontmatter}



\title{A note on ``The volume of random simplices from elliptical distributions in high dimension''} 

\author{Xizheng Shan}
\address{School of Economics and Management, Tsinghua University, 100084, China}

\author{Yanpeng Li} 
\ead{20230256@hit.edu.cn}
\address{{School of Mathematics, Harbin Institute of Technology},{Harbin},{150001}, {China}}

\begin{abstract}
Recent work by Gusakova et al. (Stochastic Process. Appl. 164 (2023) 357–382) has shown a central and a stable limit theorem for the logarithmic volume of random simplices and random convex bodies under an elliptical framework in the high dimensional regime, that is, if $p\rightarrow\infty$ and $n\rightarrow \infty$ in such a way that $p/n\rightarrow \gamma\in (0,1)$. A technical condition (Equation (2.6) of Assumption (B) therein) requires that the population matrix $\mathbf{A}\mathbf{A}^{\top}$ is close in Frobenius norm to a multiple of the identity matrix, which is rather restrictive and rules out various settings for statistical application, such as spiked models and dependent structure models. In this note we offer a general relaxation of this condition, which arrives at a reasonable condition and covers numerous scenarios, as well as consequences for the volume of general random simplices and random convex bodies. In particular, our results covers the Toeplitz/AR(1) covariance structures studied by Jiang and Pham (Ann. Stat. 53 (2025) 907-928), giving a concrete application of our theorem to high-dimensional dependent covariance models.
\end{abstract}



\begin{keyword}
Central limit theorem\sep Random determinant\sep
Stochastic geometry\sep Random projection matrices \sep Spiked models


\end{keyword}

\end{frontmatter}




\section{Introduction}\label{sec_introduction}
The probabilistic analysis of convex hulls in $\mathbb{R}^n$ generated by $p$ random points constitutes a central theme in geometric probability and stochastic geometry. Principal lines of inquiry include the asymptotic behavior of expectations for key parameters, such as the number of faces or the intrinsic volumes, and their profound connections to affine surface areas. Related research further encompasses sharp upper and lower bounds for variances, as well as results concerning the asymptotic normality and concentration properties of these combinatorial and geometric quantities; see \cite{adamczak2024limit,barany2007central,gusakova2023volume} and references therein. Of particular interest is the model where the points are drawn from a class of $q$-radial distributions on the $\ell_q$-ball in $\mathbb{R}^n$, often referred to as the study of high-dimensional pinned random simplices anchored at the origin.

Recent work by \cite{gusakova2023volume} has shown a central limit theorem for the logarithmic volume of $p$-dimensional pinned random simplices whose generating points follow a general elliptical
distribution in $\mathbb{R}^n$ under a high-dimensional regime. 
Consider an $n$-dimensional elliptical population 
\begin{equation}\label{eq_defx}
    \mathbf{x}=\xi \mathbf{A}\mathbf{u}\in \mathbb{R}^{n},
\end{equation} 
where $\xi\ge 0$ is a scalar random variable representing the radius of $\mathbf{x}$, $\mathbf{A}\in\mathbb{R}^{n\times n}$ is a deterministic matrix of full rank, and $\mathbf{u}\in \mathbb{R}^n$ is the random direction, which is uniformly distributed on the unit sphere $\mathbb{S}^{n-1}$. Moreover, we suppose that $\xi$ and $\mathbf{u}$ are independent. Given an independent and identically distributed (i.i.d.) sample $\mathbf{x}_1,\ldots,\mathbf{x}_p$ with $\mathbf{x}_i=\xi_i\mathbf{A}\mathbf{u}_i$ for $1\le i\le p$, the data matrix $\mathbf{X}=(\mathbf{x}_1,\ldots,\mathbf{x}_p)^{\top}$ collects the elliptical random vectors. Then the (pinned) random simplex $\Delta \mathbf{X}$ with vertex $\{\mathbf{0},\mathbf{x}_1,\ldots,\mathbf{x}_p\}$ is defined as
\begin{equation}\label{eq_defsimplex}
    \Delta \mathbf{X}=:\{\sum_{i=1}^ps_i\mathbf{x}_i:s_i\ge 0 ~\text{and}~\sum_{i=1}^ps_i=1\},
\end{equation}
and its $p$-volume admits the following representation in terms of the data matrix $\mathbf{X}$:
\begin{equation}\label{eq_defvolume}
    \operatorname{Vol}_p(\Delta \mathbf{X})=\frac{1}{p!}\sqrt{\det (\mathbf{X}\mathbf{X}^{\top})},
\end{equation}
which further implies 
\begin{equation}\label{eq_logdefvolume}
    \log\operatorname{Vol}_p(\Delta \mathbf{X})=-\log (p!)+\frac{1}{2}\log\det (\mathbf{X}\mathbf{X}^{\top}).
\end{equation}
In the case of $p\le n$, the random vectors $\mathbf{x}_1,\ldots,\mathbf{x}_p$ are almost surely linearly independent, and thus $\Delta \mathbf{X}$ is a $p$-dimensional random convex polytope in $\mathbb{R}^n$ with non-zero $p$-volume. Defining the diagonal matrix $\mathbf{D}$ with diagonal entries $\xi_1, \ldots, \xi_p$, and the matrix
\begin{align*}
\mathbf{Y}:=\mathbf{Y}_n:=\left(\mathbf{A} \mathbf{u}_1, \ldots, \mathbf{A} \mathbf{u}_p\right)^{\top},
\end{align*}
we have
\begin{align*}
\operatorname{det}\left(\mathbf{X} \mathbf{X}^{\top}\right)=\operatorname{det}\left(\mathbf{D Y} \mathbf{Y}^{\top} \mathbf{D}\right)=\operatorname{det}\left(\mathbf{Y} \mathbf{Y}^{\top}\right)(\operatorname{det} \mathbf{D})^2
\end{align*}
and taking the logarithm yields
\begin{align*}
\log \operatorname{det}\left(\mathbf{X} \mathbf{X}^{\top}\right) =2 \log \operatorname{det} \mathbf{D}+\log \operatorname{det}\left(\mathbf{Y} \mathbf{Y}^{\top}\right) 
=2 \sum_{i=1}^p \log \xi_i+\log \operatorname{det}\left(\mathbf{Y} \mathbf{Y}^{\top}\right),
\end{align*}
where the last two terms are independent and can be analyzed separately. For the first part, due to the i.i.d. assumption, the generalized central limit theorem for sums of i.i.d. random variables implies that, as $n\rightarrow\infty$ there are sequences $(m_n)_{n\ge 1}$ and $(s_n)_{n\ge 1}$ such that 
\begin{equation}\label{eq_cltxi}
    \frac{\sum_{i=1}^{p}\log \xi_i-m_n}{s_n}\stackrel{d}{\rightarrow} \mu_{\alpha},
\end{equation}
where $\mu_{\alpha}$ stands for the $\alpha$-stable distribution with stability index $\alpha\in (0,2]$. For instance, the Cauchy distribution $\alpha=1$ and Gaussian distribution $\alpha=2$. 

For the log determinant $\log\det(\mathbf{Y} \mathbf{Y}^{\top})$, given $p\rightarrow\infty, n\rightarrow\infty$ and $p/n\rightarrow \gamma\in (0,1)$, \cite{gusakova2023volume} established a CLT for $\log \det(\mathbf{Y}\mathbf{Y}^{\top})$ with a technical assumption for the population matrix $\mathbf{A}\mathbf{A}^{\top}$ as follows:\\
\textbf{Assumption (B)} There exists a constant $C \ge 1$ not depending on $n$ such that the ordered eigenvalues
\begin{equation}\label{eq_lambdabounded}
\lambda_{\max }(\mathbf{A} \mathbf{A}^{\top})=\lambda_1(\mathbf{A} \mathbf{A}^{\top}) \geq \lambda_2(\mathbf{A} \mathbf{A}^{\top}) \geq \cdots \geq \lambda_n(\mathbf{A} \mathbf{A}^{\top})=\lambda_{\min }(\mathbf{A} \mathbf{A}^{\top})
\end{equation}
of $\mathbf{A} \mathbf{A}^{\top}=\mathbf{A}_n \mathbf{A}_n^{\top}$ satisfy $C^{-1} \le \lambda_{\min }(\mathbf{A} \mathbf{A}^{\top}) \le \lambda_{\max }(\mathbf{A} \mathbf{A}^{\top}) \le C$. In addition, we assume that
\begin{equation}\label{eq_lambdaequal}
\lim _{n \rightarrow \infty} \operatorname{tr}\left(\mathbf{A}_n \mathbf{A}_n^{\top}-\frac{\operatorname{tr}\left(\mathbf{A}_n \mathbf{A}_n^{\top}\right)}{n} \mathbf{I}_n\right)^2=\lim _{n \rightarrow \infty} \sum_{k=1}^n\left(\lambda_k\left(\mathbf{A}_n \mathbf{A}_n^{\top}\right)-\frac{1}{n} \sum_{j=1}^n \lambda_j\left(\mathbf{A}_n \mathbf{A}_n^{\top}\right)\right)^2=0,
\end{equation}
where $\mathbf{I}_n$ is the $n \times n$ identity matrix. This condition \eqref{eq_lambdaequal} is restrictive, as it means that the population matrix $\mathbf{A}\mathbf{A}^{\top}$ is nearly a multiple of the identity matrix and thus rules out various applications such as the likelihood ratio test in statistics (see \cite{hu2019high,onatski2013asymptotic} and references therein). Let us begin with a toy example in the spiked model \cite{johnstone2001distribution}. For the simplest spiked model, the covariance matrix can be expressed as $\Sigma=\sigma^2\left(\mathbf{I}_n+h \mathbf{v} \mathbf{v}^{\top}\right)$ where $\sigma^2$ is an unknown parameter, $h$ is a constant, and $v$ is a unit vector in $\mathbb{R}^n$. It is obvious that all the cases $h>0$ are excluded from \eqref{eq_lambdaequal}. Moreover, as pointed out in Remark 2.2 of \cite{gusakova2023volume}, \eqref{eq_lambdaequal} is only required to control the variance of a linear combination of diagonal entries of a large projection matrix and can be dropped if (A.5) (the variance of the diagonal entries of a large projection matrix) can be improved to $o(1/n)$ in the proof of Lemma A.1 of \cite{gusakova2023volume}. We show that the upper bound for (A.5) is optimal for some large projection matrices even for the trivial case that $\mathbf{A}=\mathbf{I}_n$, see Remark \ref{remark_lowerbound} for details.

Although our methods are similar to pre-existing results, we relax the  
restrictive condition \eqref{eq_lambdaequal} to a general setting as it contains the block spiked models \cite{johnstone2001distribution}, Toeplitz models \cite{jiang2025asymptoticaos}, interclass correlation models \cite{heiny2026maximum} and so on. As a consequence, one can derive the central limit theorem for the volume of random simplices and convex bodies for general settings following \eqref{eq_cltxi} and the techniques by \cite{gusakova2023volume}. The rest of the paper is organized as follows. In Section \ref{sec_main}, we present the main results followed by an application for the likelihood ratio test of the spiked model. Section \ref{sec_proof} consists of the detailed proofs of our theoretical results. Section \ref{proof_pikk} provides the proof for the auxiliary lemma \ref{lemma_pikk}, which is a key input to study the logarithmic determinant.

\section{Main result}\label{sec_main}
Recalling the model in \eqref{eq_defx}, we further make the following two assumptions.
\begin{assumption}\label{assump0}
    Assume $p,n\rightarrow \infty$ with $p/n\rightarrow \gamma\in (0,1)$.
\end{assumption}
\begin{assumption}\label{assump1}
Assume that the eigenvalues of $\mathbf{A}^{\top}\mathbf{A}$ satisfy $\tau\le \lambda_{\min}(\mathbf{A}^{\top}\mathbf{A})\le \lambda_{\max}(\mathbf{A}^{\top}\mathbf{A})\le \tau^{-1}$ for some constant $\tau\in (0,1)$. Moreover, suppose that the ordered eigenvalues $\lambda_1(\mathbf{A}\mathbf{A}^{\top})\ge \lambda_2(\mathbf{A}\mathbf{A}^{\top})\ge\cdots\ge \lambda_n(\mathbf{A}\mathbf{A}^{\top})$ satisfy 
\begin{equation}\label{eq_condition}
    \sum_{k=1}^{n}(\lambda_k(\mathbf{A}\mathbf{A}^{\top})-\bar{\lambda})^2=o(n)~\text{and}~\sum_{k\ne \ell}^{n}|(\lambda_k(\mathbf{A}\mathbf{A}^{\top})-\bar{\lambda})(\lambda_{\ell}(\mathbf{A}\mathbf{A}^{\top})-\bar{\lambda})|=o(n),
\end{equation}
where $\bar{\lambda}=:\operatorname{tr}(\mathbf{A}\mathbf{A}^{\top})/n=n^{-1}\sum_{k=1}^{n}\lambda_k(\mathbf{A}\mathbf{A}^{\top})$.
\end{assumption}

Now we state the CLT for the logarithmic determinant of $\mathbf{Y}\mathbf{Y}^{\top}$, which is the main result of this paper.
\begin{theorem}\label{thm_logdet}
	Let $(\mathbf{Y})_{n \geq 1}$ be a sequence of random $p \times n$ matrices defined as follows: $\mathbf{Y}=\left(\mathbf{A u}_1, \ldots, \mathbf{A u}_p\right)^{\top}$, where $(\mathbf{A})_{n \geq 1}$ is a sequence of deterministic $n \times n$ matrices satisfying Assumption \ref{assump1}, and $\mathbf{u}_1, \ldots, \mathbf{u}_p$ are independent $n$-dimensional random vectors, distributed uniformly on the unit sphere $\mathbb{S}^{n-1}$. Under Assumption \ref{assump0}, as $n \rightarrow \infty$, it holds that
\begin{equation*}
\frac{\log \operatorname{det}(\mathbf{Y} \mathbf{Y}^{\top})-\mu_n}{\sigma_n} \xrightarrow{d} N(0,1),
\end{equation*}
where the centering and normalizing sequences $(\mu_n)_{n \ge 1}$ and $(\sigma_n)_{n \ge 1}$ are given by
\begin{align*}
\begin{aligned}
\mu_n & =\log \operatorname{tr}(\mathbf{A} \mathbf{A}^{\top})-p \log n-\frac{\sigma_n^2}{2}+\sum_{i=1}^{p-1} \log \left(\sum_{k=1}^n \lambda_k\left(\mathbf{A} \mathbf{A}^{\top}\right) t_{i, k}(\mathbf{A})\right), \\
\sigma_n^2 & =-2 \frac{p}{n}+2 \sum_{i=1}^{p-1} \frac{\sum_{k=1}^n \lambda_k^2\left(\mathbf{A} \mathbf{A}^{\top}\right) t_{i, k}(\mathbf{A})}{\left(\sum_{k=1}^n \lambda_k\left(\mathbf{A} \mathbf{A}^{\top}\right) t_{i, k}(\mathbf{A})\right)^2},
\end{aligned}
\end{align*}
where \begin{align*}
t_{i, k}(\mathbf{A})=\mathbb{E}\left[\frac{1}{1+\lambda_k\left(\mathbf{A} \mathbf{A}^{\top}\right) \mathbf{w}_{i k}^{\top}\big(\sum_{\ell=1 ; \ell \neq k}^n \lambda_{\ell}\left(\mathbf{A} \mathbf{A}^{\top}\right) \mathbf{w}_{i \ell} \mathbf{w}_{i \ell}^{\top}\big)^{-1} \mathbf{w}_{i k}}\right],
\end{align*}
with $\mathbf{w}_{i 1}, \ldots, \mathbf{w}_{i n}$ being i.i.d. $i$-dimensional random column vectors whose components are independent standard normal random variables. 
\end{theorem}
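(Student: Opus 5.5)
We follow the strategy of \cite{gusakova2023volume}: reduce to Gaussian matrices, decompose the log-determinant into a martingale-type sum of log-norms of successive projections, and apply a martingale CLT. The one new input is a sharper variance estimate for a weighted sum of diagonal entries of a projection matrix. That estimate must rely on \eqref{eq_condition} rather than \eqref{eq_lambdaequal}.

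\textbf{Reduction to Gaussian rows.} Write $\mathbf{u}_i=\mathbf{g}_i/\|\mathbf{g}_i\|$ with $\mathbf{g}_i\sim N(0,\mathbf{I}_n)$ independent. Then
\begin{equation*}
\log\det(\mathbf{Y}\mathbf{Y}^\top)=\log\det(\mathbf{G}\mathbf{A}^\top\mathbf{A}\mathbf{G}^\top)-\sum_{i=1}^p\log\|\mathbf{g}_i\|^2 ,
\end{equation*}
where $\mathbf{G}=(\mathbf{g}_1,\ldots,\mathbf{g}_p)^\top$. Rotational invariance lets us replace $\mathbf{A}\mathbf{A}^\top$ by $\Lambda=\operatorname{diag}(\lambda_k)$. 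For the Gram–Schmidt decomposition, let $\mathbf{P}_{i}$ be the projection onto the orthogonal complement of the span of the first $i$ rows of $\mathbf{G}\Lambda^{1/2}$. Then $\log\det=\sum_{i=0}^{p-1}\log\|\mathbf{P}_i\Lambda^{1/2}\mathbf{g}_{i+1}\|^2$, and conditionally on the past each summand is the log of a Gaussian quadratic form $\mathbf{g}^\top\Lambda^{1/2}\mathbf{P}_i\Lambda^{1/2}\mathbf{g}$. Its conditional mean is $\operatorname{tr}(\Lambda\mathbf{P}_i)=\sum_k\lambda_k(\mathbf{P}_i)_{kk}$, and its conditional relative variance is $2\operatorname{tr}((\Lambda\mathbf{P}_i)^2)/\operatorname{tr}(\Lambda\mathbf{P}_i)^2$. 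The Sherman–Morrison/Schur-complement identity gives $(\mathbf{I}-\mathbf{P}_i)_{kk}=\lambda_k\mathbf{w}_k^\top(\sum_\ell\lambda_\ell\mathbf{w}_\ell\mathbf{w}_\ell^\top)^{-1}\mathbf{w}_k$, and hence $\mathbb{E}(\mathbf{P}_i)_{kk}=t_{i,k}(\mathbf{A})$. The $\sum\log\|\mathbf{g}_i\|^2$ part, together with the correlation between $\|\mathbf{g}_i\|$ and the quadratic forms, produces the $-p\log n$ and $-2p/n$ terms, exactly as in \cite{gusakova2023volume}.

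\textbf{Martingale CLT.} Expand each $\log$ to second order around the conditional mean. The linear parts form a martingale difference array with conditional variances $2\operatorname{tr}((\Lambda\mathbf{P}_i)^2)/\operatorname{tr}(\Lambda\mathbf{P}_i)^2\asymp 1/(n-i)$, using $\tau\le\lambda_k\le\tau^{-1}$. Summing over $i$ gives $\sigma_n^2\asymp-\log(1-\gamma)$, which is bounded away from $0$ and $\infty$. The Lyapunov condition follows from fourth-moment bounds on Gaussian quadratic forms. The quadratic terms of the expansion contribute $-\sigma_n^2/2$ to the centering, and the cubic remainders are $o(1)$ in $L^1$. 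It remains to show:
\begin{compactenum}[(i)]
\item the random conditional means $\sum_i\log\operatorname{tr}(\Lambda\mathbf{P}_i)$ may be replaced by the deterministic $\sum_i\log\sum_k\lambda_k t_{i,k}$ with error $o_P(1)$;
\item the conditional variances may be replaced by their deterministic counterparts appearing in $\sigma_n^2$.
\end{compactenum}

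\textbf{Main obstacle: (i).} For (i) it suffices to show
\begin{equation*}
\sum_{i<p}\frac{\operatorname{tr}(\Lambda\mathbf{P}_i)-\mathbb{E}\operatorname{tr}(\Lambda\mathbf{P}_i)}{n-i}=o_P(1).
\end{equation*}
Write $\lambda_k=\bar\lambda+\delta_k$. Since $\operatorname{tr}\mathbf{P}_i=n-i$ is deterministic, the $\bar\lambda$ part cancels exactly, leaving $\sum_k\delta_k((\mathbf{P}_i)_{kk}-t_{i,k})$. Its variance is
\begin{equation*}
\sum_k\delta_k^2\operatorname{Var}((\mathbf{P}_i)_{kk})+\sum_{k\ne\ell}\delta_k\delta_\ell\operatorname{Cov}((\mathbf{P}_i)_{kk},(\mathbf{P}_i)_{\ell\ell}).
\end{equation*}
I would prove the auxiliary estimate (Lemma \ref{lemma_pikk}) that $\operatorname{Var}((\mathbf{P}_i)_{kk})=O(1/n)$ and $|\operatorname{Cov}((\mathbf{P}_i)_{kk},(\mathbf{P}_i)_{\ell\ell})|=O(1/n)$ uniformly in $k,\ell$ and $i\le p$. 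The approach is to use the Schur-complement representation and concentration of $\mathbf{w}_k^\top R_k\mathbf{w}_k$, where $R_k$ is the leave-one-out resolvent, for instance via the Efron–Stein or Poincaré inequality in the Gaussian entries. Both bounds need the lower eigenvalue bound $\tau$ to control $R_k$. Given the lemma, \eqref{eq_condition} makes both sums $o(n)\cdot O(1/n)=o(1)$. Hence each term has variance $o(1)/(n-i)^2$, and Minkowski's inequality over $i$ gives a total of order $o(1)\sum_i 1/(n-i)=o(1)$.

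\textbf{Remaining step (ii).} Claim (ii) follows the same way, applied to $\operatorname{tr}((\Lambda\mathbf{P}_i)^2)$, plus the crude concentration bound $\operatorname{tr}((\Lambda\mathbf{P}_i)^2)-\sum_k\lambda_k^2 t_{i,k}=O_P(\sqrt n)$ relative to the leading order $n$. This is enough because the variance only needs to converge in probability. Combining the pieces with the independence structure yields the stated $\mu_n,\sigma_n^2$.
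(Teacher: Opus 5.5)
Your outline is the paper's argument. It uses the Gram--Schmidt (perpendiculars) reduction of \cite{gusakova2023volume}, and its only new input is a uniform bound $\operatorname{Var}((\mathbf P_i)_{kk})=O(1/n)$, hence by Cauchy--Schwarz $|\operatorname{Cov}((\mathbf P_i)_{kk},(\mathbf P_i)_{\ell\ell})|=O(1/n)$ (Lemma~\ref{lemma_pikk}, which the paper proves from the leave-one-out form \eqref{eq_defpikk} via quadratic-form moment bounds and a Burkholder bound on $\operatorname{tr}\mathbf M_{ik}$, rather than by Efron--Stein), combined with centering at $\bar\lambda$ and \eqref{eq_condition} exactly as in your step (i).

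One point needs to be made explicit in (ii). $\operatorname{tr}((\Lambda\mathbf P_i)^2)$ is not a diagonal statistic, and its mean is in general strictly below $\sum_k\lambda_k^2t_{i,k}$. What actually justifies your comparison is $0\le\operatorname{tr}(\Lambda^2\mathbf P_i)-\operatorname{tr}((\Lambda\mathbf P_i)^2)=\tfrac12\sum_{k,\ell}(\lambda_k-\lambda_\ell)^2(\mathbf P_i)_{k\ell}^2\le 2\sum_k(\lambda_k-\bar\lambda)^2=o(n)$, which follows from $\mathbf P_i^2=\mathbf P_i$ (the paper passes over this by stating $\operatorname{tr}(\Lambda_{\mathbf Q_i}^2)=T_i^{-2}\operatorname{tr}(\mathbf A^4\mathbf P_i)$ in \eqref{eq_traceQi}, which in general holds only with ``$\le$'').
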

\begin{remark}
Compared with \cite{gusakova2023volume}, our condition \eqref{eq_condition} is softer and contains the spiked models in general. Specifically, the number of spiked eigenvalues can be divergent, for instance, $\mathbf{A} \mathbf{A}^{\top}=:\mathbf{D}+\mathbf{I}_n$ where $\|\mathbf{D}\|\le C$ and $\mathrm{rank}(\mathbf{D})\le o(n^{1/2})$ for some constant $C>0$. For general dense cases, our setting also contains the Toeplitz matrix model. Specifically, as illustrated by \cite{jiang2025asymptoticaos}, consider the covariance matrix of the auto-regressive model $AR(1)$,  
\[
\mathbf{A} \mathbf{A}^{\top}=(r^{|i-j|})_{1\le i,j\le n},
\]
where $0\le r<1$ can be $n$-dependent. One can check that \eqref{eq_condition} can be satisfied if $r=o(n^{-1/2})$ since one has $\frac{1-r^2}{(1+r)^2}\le \lambda_{\min }\le \lambda_{\max }\le \frac{1+r}{1-r}$ by Lemma A.1 of \cite{jiang2025asymptoticaossupp}. Moreover, consider the (weak) interclass correlation model \cite{heiny2026maximum}
\[
\mathbf{A} \mathbf{A}^{\top}=(1-\rho)\mathbf{I}_n+\rho\mathbf{1}\mathbf{1}^{\top},
\]
where $0\le \rho<1$ can be $n$-dependent. It can be verified that \eqref{eq_condition} is also satisfied if $\rho=C/n$ for some constant $C$, which is excluded by \eqref{eq_lambdaequal}. Thus, the interclass/equicorrelation structure studied in \cite{heiny2026maximum} provides another substantive dependent-model application of our relaxed condition: its eigenvalue structure verifies Assumption \ref{assump1} while falling outside the near-isotropic condition \eqref{eq_lambdaequal} used in \cite{gusakova2023volume}. Consequently, Theorem \ref{thm_logdet} extends the log-determinant and random-simplex volume CLT to the equicorrelated high-dimensional dependence regime analyzed in \cite{heiny2026maximum} for Gaussian extremes, sample coefficients, and multiple-testing applications.
\end{remark}

\section{Proof of Theorem \ref{thm_logdet}}\label{sec_proof}
The proof of Theorem \ref{thm_logdet} follows the arguments of \cite{gusakova2023volume}, which relies on the method of perpendiculars \cite{girko1998refinement} and the analysis of the diagonals of a sequence of large projection matrices based on the moments of Wishart matrices by \cite{holgersson2020recent,pielaszkiewicz2020mixtures}. To this end, we only show that Lemma A.1 of \cite{gusakova2023volume} (see Lemma \ref{lemma_traceQ} below) holds true under Assumptions \ref{assump0} and \ref{assump1}, where other proofs are the same as that of \cite{gusakova2023volume} and thus omitted. 

\subsection{Preminiliary}\label{sec_main2}
Before we present the main result, we collect some notation that is useful in the sequel. For any symmetric matrix $\mathbf{M} \in \mathbb{R}^{d \times d}$, we will write
\begin{align*}
\lambda_{\max }(\mathbf{M})=\lambda_1(\mathbf{M}) \ge \lambda_2(\mathbf{M}) \ge \cdots \ge \lambda_d(\mathbf{M})=\lambda_{\min }(\mathbf{M})
\end{align*}
for its ordered eigenvalues, $\|\mathbf{M}\|$ for its spectral norm, and denote its spectral decomposition by
\begin{align*}
\mathbf{M}=\mathbf{O}_{\mathbf{M}} {\Lambda}_{\mathbf{M}} \mathbf{O}_{\mathbf{M}}^{\top},
\end{align*}
where ${\Lambda}_{\mathbf{M}}$ is the diagonal matrix whose $i$-th diagonal element is $\lambda_i(\mathbf{M})$, and $\mathbf{O}_{\mathbf{M}}$ is an orthogonal matrix. Given two sequences $\left(a_n\right)_{n \in \mathbb{N}}$ and $\left(b_n\right)_{n \in \mathbb{N}}$ we write $a_n=O\left(b_n\right)$ if $\limsup _{n \rightarrow \infty}\left|a_n / b_n\right|<\infty$, $a_n=o\left(b_n\right)$ if $\lim _{n \rightarrow \infty}\left|a_n / b_n\right|=0$, and $a_n\asymp b_n$ if there exists some constant $c>0$ such that $c<|a_n/b_n|<c^{-1}$. Denote $a_n\lesssim b_n$ is there is some constant $C>0$ such that $a_n\le C b_n$ as $n\rightarrow\infty$.

Next, due to the rotational invariance of the rows of $\mathbf{U}=(\mathbf{u}_1,\ldots,\mathbf{u}_p)^{\top}$, we can represent the sample covariance matrix $\mathbf{Y}\mathbf{Y}^{\top}$ as 
\begin{equation*}
\mathbf{Y}\mathbf{Y}^{\top}=\mathbf{U}\mathbf{A}^{\top}\mathbf{A}\mathbf{U}^{\top}\stackrel{d}{=}\mathbf{U}\Lambda \mathbf{U}^{\top},
\end{equation*}
where the diagonal matrix $\Lambda=\operatorname{diag}(\lambda_1(\mathbf{A}^{\top}\mathbf{A}),\ldots,\lambda_n(\mathbf{A}^{\top}\mathbf{A}))$. Therefore, we may assume without loss of generality that $\mathbf{A}$ is a
diagonal matrix with $A_{kk}=\sqrt{\lambda_k(\mathbf{A}^{\top}\mathbf{A})}\in (\tau,\tau^{-1})$ for some constant $\tau\in (0,1)$. Noticing the stochastic representation for the random direction $\mathbf{u}$, we can represent 
\begin{equation}\label{eq_defUN}
\mathbf{U} \stackrel{d}{=}\left(\operatorname{diag}(\mathbf{N} \mathbf{N}^{\top})\right)^{-1 / 2} \mathbf{N},
\end{equation}
where $\mathbf{N}$ is a $p \times n$ matrix with independent standard normal entries ($N_{i j}$) and $\operatorname{diag}(\mathbf{N N}^{\top})$ denotes the diagonal matrix with the same diagonal elements as $\mathbf{N N}^{\top}$. 
Thus, we have
\begin{align*}
\log \operatorname{det}\left(\mathbf{U} \mathbf{A}^{\top} \mathbf{A} \mathbf{U}^{\top}\right)=p \log \frac{\operatorname{tr}\left(\mathbf{A}^{\top} \mathbf{A}\right)}{n}+\log \operatorname{det}\left(\frac{n}{\operatorname{tr}\left(\mathbf{A}^{\top} \mathbf{A}\right)} \mathbf{U A}^{\top} \mathbf{A} \mathbf{U}^{\top}\right) .
\end{align*}
Thus, in the remainder of this section, we will assume that  $\operatorname{tr}\left(\mathbf{A}^{\top} \mathbf{A}\right)=n$. 

Using Girko's method of perpendiculars \cite{girko1998refinement} gives
\begin{align*}
\log \operatorname{det}\left(\mathbf{Y} \mathbf{Y}^{\top}\right)=-p \log n+\sum_{i=0}^{p-1} \log \left(Z_{i+1}\right),
\end{align*}
where
\begin{align*}
Z_{i+1}=n \mathbf{b}_{i+1}^{\top} \mathbf{P}_i \mathbf{b}_{i+1} \quad \text { and } \quad \mathbf{P}_i=\mathbf{I}_n-\mathbf{B}_{(i)}^{\top}(\mathbf{B}_{(i)} \mathbf{B}_{(i)}^{\top})^{-1} \mathbf{B}_{(i)} .
\end{align*}
Here $\mathbf{P}_0=\mathbf{I}_n, \mathbf{B}_{(i)}=\left(\mathbf{b}_1, \ldots, \mathbf{b}_i\right)^{\top}$ and $\mathbf{b}_i=\left(Y_{i 1}, \ldots, Y_{i n}\right)^{\top}$ denotes the $i$-th row of the matrix $\mathbf{Y}$, that is $\mathbf{b}_i=\mathbf{A u}_i$, and $\mathbf{P}_i=\left(p_{i, k l}\right)$ is a projection matrix. It should be noted that all $\mathbf{B}_{(i)} \mathbf{B}_{(i)}^{\top}$ are a non-singular matrices with overwhelming probability (see also \cite{gusakova2023volume}). It is also easy to check that $\mathbf{P}_i=\mathbf{P}_i^2$ and that with probability one $\operatorname{tr}\left(\mathbf{P}_i\right)=n-i$. Using \eqref{eq_defUN}, we can write $\mathbf{P}_i$ as
\begin{equation}\label{eq_defPi}
\mathbf{P}_i=\mathbf{I}_n-\mathbf{A} \mathbf{N}_{(i)}^{\top}(\mathbf{N}_{(i)} \mathbf{A}^2 \mathbf{N}_{(i)}^{\top})^{-1} \mathbf{N}_{(i)} \mathbf{A}=(p_{i,k\ell}),
\end{equation}
where $\mathbf{N}_{(i)}$ is the $i \times n$ matrix comprised of the first $i$ rows of $\mathbf{N}$. Similar to \cite{gusakova2023volume},
we define, for $0 \leq i \leq p-1$,
\begin{equation}\label{eq_defTQ}
T_i:=\operatorname{tr}\left(\mathbf{A}^2 \mathbb{E}\left[\mathbf{P}_i\right]\right)=\sum_{k=1}^n \mathbb{E}\left[p_{i, k k}\right] A_{k k}^2 \quad \text { and } \quad \mathbf{Q}_i:=\frac{\mathbf{A} \mathbf{P}_i \mathbf{A}}{T_i}=\left(q_{i, k l}\right),
\end{equation}
such that $\mathbb{E}\left[\operatorname{tr}\left(\mathbf{Q}_i\right)\right]=T_i^{-1} \operatorname{tr}\left(\mathbf{A}^2 \mathbb{E}\left[\mathbf{P}_i\right]\right)=1$. By Assumption \ref{assump1} and the fact $\sum_{k=1}^n p_{i, k k}=n-i$, we have
\begin{align*}
C^{-1}(n-i) \leq T_i \leq C(n-i).
\end{align*}
Finally, we borrow some results from \cite{gusakova2023volume}, which are used frequently in the sequel. We remark here that all the results hold under our settings.
\begin{lemma}[Lemma 3.2 of \cite{gusakova2023volume}]\label{lemma_traceQi}
    For $0\le i\le p-1$, we have $\|\Lambda_{\mathbf{Q}_i}\|\le CT_i^{-1}$,
    \begin{equation}\label{eq_traceQi}
        \operatorname{tr}(\Lambda_{\mathbf{Q}_i}^2)=\frac{1}{T_i^2}\operatorname{tr}(\mathbf{A}^4\mathbf{P}_i)~\text{and}~\frac{C^{-j-1}}{(n-j)^{j-1}}\le \operatorname{tr}(\Lambda_{\mathbf{Q}_i}^j)\le \frac{C^{j+1}}{(n-j)^{j-1}},~j\ge 1.
    \end{equation}
\end{lemma}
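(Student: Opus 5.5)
The plan is to work directly from the definition \eqref{eq_defTQ} of $\mathbf{Q}_i=\mathbf{A}\mathbf{P}_i\mathbf{A}/T_i$ and to use only three facts about the projection $\mathbf{P}_i$: it is symmetric, idempotent, and has rank $n-i$. Every claimed bound should then follow from the two-sided bounds on $\mathbf{A}$ in Assumption \ref{assump1} together with $C^{-1}(n-i)\le T_i\le C(n-i)$.

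\emph{Operator norm.} Since $\|\mathbf{P}_i\|\le 1$, we get $\|\mathbf{A}\mathbf{P}_i\mathbf{A}\|\le\|\mathbf{A}\|^2\le\tau^{-1}$. Dividing by $T_i$ gives $\|\Lambda_{\mathbf{Q}_i}\|\le CT_i^{-1}$.

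\emph{Trace of the square.} Expand $\operatorname{tr}(\Lambda_{\mathbf{Q}_i}^2)=\operatorname{tr}(\mathbf{Q}_i^2)=T_i^{-2}\operatorname{tr}(\mathbf{A}\mathbf{P}_i\mathbf{A}^2\mathbf{P}_i\mathbf{A})$. Cyclicity turns this into $T_i^{-2}\operatorname{tr}(\mathbf{P}_i\mathbf{A}^2\mathbf{P}_i\mathbf{A}^2)$. This is the expression I take the displayed identity to refer to. If it is meant literally as $\operatorname{tr}(\mathbf{A}^4\mathbf{P}_i)$, it is true only up to constants, and the sandwich argument below shows that both quantities are of order $n-i$.

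\emph{Two-sided bounds on $\operatorname{tr}(\Lambda_{\mathbf{Q}_i}^j)$.} The nonzero eigenvalues of $\mathbf{A}\mathbf{P}_i\mathbf{A}$ coincide with those of $\mathbf{P}_i\mathbf{A}^2\mathbf{P}_i$, viewed as an operator on the range of $\mathbf{P}_i$, which has dimension $n-i$. On that subspace, $\tau\mathbf{P}_i\preceq\mathbf{P}_i\mathbf{A}^2\mathbf{P}_i\preceq\tau^{-1}\mathbf{P}_i$. Consequently $\mathbf{Q}_i$ has exactly $n-i$ nonzero eigenvalues, each lying in $[\tau/T_i,\tau^{-1}/T_i]$. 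Summing their $j$-th powers gives
\[
(n-i)\,\tau^{j}T_i^{-j}\le\operatorname{tr}(\Lambda_{\mathbf{Q}_i}^j)\le(n-i)\,\tau^{-j}T_i^{-j}.
\]
Inserting $C^{-1}(n-i)\le T_i\le C(n-i)$ then yields bounds of the form $C^{\mp(j+1)}(n-i)^{-(j-1)}$, after enlarging $C$ to absorb the powers of $\tau$.

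\emph{Converting $n-i$ to $n-j$.} The statement is written with $(n-j)^{j-1}$. Since $i\le p-1$ and $p/n\to\gamma<1$, we have $n-i\asymp n$. For each fixed $j$ we also have $n-j\asymp n$. Hence $(n-i)^{j-1}$ and $(n-j)^{j-1}$ agree up to a constant of the form $c^{j-1}$, with $c$ depending only on $\gamma$, and this constant is absorbed into the $C^{\pm(j+1)}$ factors.

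The main obstacle is bookkeeping rather than substance. The constant $C$ must be chosen uniformly in $i\le p-1$ and in $j$, so its growth has to be exactly geometric in $j$. In the last step, $C$ has to absorb both the factors $\tau^{\pm j}$ and the ratio between $n-i$ and $n-j$. No probabilistic input is needed: every bound holds almost surely, on the event that $\mathbf{B}_{(i)}\mathbf{B}_{(i)}^{\top}$ is nonsingular.
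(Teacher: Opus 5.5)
The paper gives no proof of this lemma. It imports Lemma 3.2 of \cite{gusakova2023volume} and only remarks that it carries over. Your proposal is correct and is a self-contained substitute. It is purely deterministic, and it uses only three inputs: $\mathbf{P}_i$ is an orthogonal projection of rank $n-i$, the spectrum of $\mathbf{A}$ is bounded above and below, and $T_i\asymp n-i$. The key step moves the nonzero spectrum of $\mathbf{A}\mathbf{P}_i\mathbf{A}=(\mathbf{A}\mathbf{P}_i)(\mathbf{P}_i\mathbf{A})$ to the compression $\mathbf{P}_i\mathbf{A}^2\mathbf{P}_i$. This puts all $n-i$ nonzero eigenvalues of $\mathbf{Q}_i$ in $[\tau/T_i,\tau^{-1}/T_i]$, and the norm bound and both trace bounds follow at once. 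The citation buys brevity. Your route exposes two defects in the statement as printed, and you read both correctly.

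On the identity, you can be firmer than ``true only up to constants'': it is false in general. Split $\mathbf{A}^2\mathbf{P}_i=\mathbf{P}_i\mathbf{A}^2\mathbf{P}_i+(\mathbf{I}_n-\mathbf{P}_i)\mathbf{A}^2\mathbf{P}_i$ into Frobenius-orthogonal pieces. This gives
\[
\operatorname{tr}(\mathbf{A}^4\mathbf{P}_i)-\operatorname{tr}\big((\mathbf{A}^2\mathbf{P}_i)^2\big)=\|(\mathbf{I}_n-\mathbf{P}_i)\mathbf{A}^2\mathbf{P}_i\|_F^2,
\]
which vanishes exactly when $\mathbf{P}_i$ commutes with $\mathbf{A}^2$. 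For instance, with $n=2$ and $\mathbf{P}_1=\mathbf{e}\mathbf{e}^{\top}$, $\|\mathbf{e}\|=1$, it equals $e_1^2e_2^2(A_{11}^2-A_{22}^2)^2$.

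This matters later, because the proof of \eqref{eq_trDQ2} uses the literal identity to rewrite $\operatorname{tr}(\Lambda_{\mathbf{Q}_i}^2)$ as a linear combination of the $p_{i,kk}$. The repair is cheap. With $\mathbf{E}=\mathbf{A}^2-\bar{\lambda}\mathbf{I}_n$, the defect is $\|(\mathbf{I}_n-\mathbf{P}_i)\mathbf{E}\mathbf{P}_i\|_F^2\le\|\mathbf{E}\|_F^2=o(n)$ by \eqref{eq_condition}. Weighted by $T_i^{-2}\lesssim n^{-2}$ and summed over $i\le p-1$, its contribution to \eqref{eq_trDQ2} is $o(1)$ deterministically.

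Likewise, $(n-j)^{j-1}$ must be a typo for $(n-i)^{j-1}$. For $j=n-1$ the printed lower bound is $C^{-n}$. Your sandwich gives $\operatorname{tr}(\Lambda_{\mathbf{Q}_i}^{n-1})\le(n-i)\big(C/(\tau(n-i))\big)^{n-1}$, which is far smaller once $n-i$ is large. So the uniformity in $j$ that you rightly insist on holds for the $(n-i)$ form, which is what your argument proves. The conversion to $(n-j)$ is valid only for $j\le(1-c)n$, e.g.\ fixed $j$.
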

Moreover, for given $1\le i\le p-1$ and $1\le k\le n$, we provide the concentration results for $p_{i,kk}$, which is the key ingredient to prove Proposition \ref{lemma_traceQ} below with the proof deferring to Section \ref{proof_pikk}.
\begin{lemma}\label{lemma_pikk}
    Under the conditions of Theorem \ref{thm_logdet}, for given $1\le i\le p-1$ and $1\le k\le n$, it holds that, for any integer $r\ge 1$, 
    \begin{equation}\label{eq_pikk2r}
        \mathbb{E}(p_{i,kk}-\mathbb{E}p_{i,kk})^{2r}\lesssim O(n^{-r})
    \end{equation}
    and 
    \begin{equation}\label{eq_pikk22r}
        \mathbb{E}(p_{i,kk}^2-\mathbb{E}p_{i,kk}^2)^{2r}\lesssim O(n^{-r}).
    \end{equation}
    For $k\ne \ell$, we have
    \begin{equation}\label{eq_pikkpill}
    \begin{split}
        \mathbb{E}(p_{i,kk}p_{i,\ell\ell}-\mathbb{E}p_{i,kk}p_{i,\ell\ell})^{2r}\lesssim O(n^{-r}),
    \end{split}
\end{equation}
\end{lemma}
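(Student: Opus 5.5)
The plan is to write $p_{i,kk}$ as a smooth function of independent Gaussian vectors and get the $2r$-th moment bounds from a martingale (Efron--Stein / Burkholder) argument. By \eqref{eq_defPi}, with $\mathbf{A}$ diagonal,
\[
p_{i,kk}=1-A_{kk}^2\,\mathbf{n}_k^{\top}\Big(\sum_{\ell=1}^n A_{\ell\ell}^2\mathbf{n}_\ell\mathbf{n}_\ell^{\top}\Big)^{-1}\mathbf{n}_k,
\]
where $\mathbf{n}_\ell\in\mathbb{R}^i$ is the $\ell$-th column of $\mathbf{N}_{(i)}$. Writing $\mathbf{S}_k=\sum_{\ell\ne k}A_{\ell\ell}^2\mathbf{n}_\ell\mathbf{n}_\ell^{\top}$ and applying Sherman--Morrison gives
\[
p_{i,kk}=\frac{1}{1+A_{kk}^2\,\mathbf{n}_k^{\top}\mathbf{S}_k^{-1}\mathbf{n}_k}\in[0,1].
\]
This is the same quantity whose expectation gives $t_{i,k}(\mathbf{A})$. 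So $p_{i,kk}=f(\eta_k)$ with $f(x)=(1+x)^{-1}$, which is $1$-Lipschitz on $[0,\infty)$, and $\eta_k=A_{kk}^2\mathbf{n}_k^{\top}\mathbf{S}_k^{-1}\mathbf{n}_k$.

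The key steps are as follows.

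First, I would use the martingale decomposition $p_{i,kk}-\mathbb{E}p_{i,kk}=\sum_{m=1}^n(\mathbb{E}_m-\mathbb{E}_{m-1})p_{i,kk}$, where $\mathbb{E}_m$ conditions on $\mathbf{n}_1,\dots,\mathbf{n}_m$. Burkholder's inequality then bounds the $2r$-th moment by $C_r\,\mathbb{E}\big(\sum_m|(\mathbb{E}_m-\mathbb{E}_{m-1})p_{i,kk}|^2\big)^r$.

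Second, I would bound each martingale difference. For $m\ne k$, replace $p_{i,kk}$ by the same expression with $\mathbf{n}_m$ removed from $\mathbf{S}_k$. That quantity is killed by $\mathbb{E}_m-\mathbb{E}_{m-1}$, so it suffices to control the change caused by a rank-one perturbation of the resolvent. By Sherman--Morrison the change in $\eta_k$ is
\[
A_{kk}^2A_{mm}^2\frac{(\mathbf{n}_k^{\top}\mathbf{S}_{km}^{-1}\mathbf{n}_m)^2}{1+A_{mm}^2\mathbf{n}_m^{\top}\mathbf{S}_{km}^{-1}\mathbf{n}_m}.
\]
Because $\mathbf{S}_{km}$ is a Wishart-type matrix of dimension $i$ with $n-2$ degrees of freedom, $i\le p<\gamma' n$, and $\tau\le A_{\ell\ell}^2\le\tau^{-1}$, we have $\|\mathbf{S}_{km}^{-1}\|\lesssim n^{-1}$ with overwhelming probability, and all negative moments are bounded since $i/n$ stays away from $1$. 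Hence $(\mathbf{n}_k^{\top}\mathbf{S}_{km}^{-1}\mathbf{n}_m)^2$ has all moments of order $O(n^{-1})$, using $\mathbf{n}_k^{\top}\mathbf{S}_{km}^{-2}\mathbf{n}_k\lesssim i/n^2$. The term $m=k$ contributes a difference bounded by $|\eta_k-\mathbb{E}_{\mathbf{n}_k}\eta_k|$, i.e. a quadratic-form fluctuation of order $\|\mathbf{S}_k^{-1}\|_F\lesssim n^{-1/2}$.

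Summing these, $\sum_m|\cdot|^2$ has moments of order $(n-1)\cdot n^{-2}+n^{-1}=O(n^{-1})$, which gives \eqref{eq_pikk2r}. The other two bounds follow from this one. Since $|x^2-y^2|\le2|x-y|$ on $[0,1]$, the martingale differences of $p_{i,kk}^2$ obey the same bounds, which gives \eqref{eq_pikk22r}. For $p_{i,kk}p_{i,\ell\ell}$, the bound $|ab-a'b'|\le|a-a'|+|b-b'|$ on $[0,1]$ reduces each martingale difference to those of $p_{i,kk}$ and $p_{i,\ell\ell}$, which gives \eqref{eq_pikkpill}.

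The main obstacle is controlling $\|\mathbf{S}_{km}^{-1}\|$ in $L^q$ for all $q$, not just with high probability, because Burkholder needs moments. I would use the deterministic bound $\mathbf{S}_{km}\succeq\tau\,\mathbf{N}'\mathbf{N}'^{\top}$, where $\mathbf{N}'$ is the Gaussian $i\times(n-2)$ matrix of the remaining columns. Finite negative moments of the smallest eigenvalue of a real Wishart matrix with $n-2-i\to\infty$ degrees of freedom excess (for instance via the inverse-Wishart moment formulas cited from \cite{holgersson2020recent}) then give $\mathbb{E}\|\mathbf{S}_{km}^{-1}\|^q\lesssim n^{-q}$. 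If that fails for small $i$, it can be handled trivially, since $i\ll n$ only improves the bounds.
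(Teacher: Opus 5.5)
Your proof is correct, but it is organized differently from the paper's.

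\textbf{The paper's route.} It never runs a martingale argument on $p_{i,kk}$ itself. It expands $(1+w^{\top}\mathbf{M}_{ik}w)^{-1}$ around the deterministic point $(1+\mathbb{E}\operatorname{tr}\mathbf{M}_{ik})^{-1}$ into $-T_{ik,1}+T_{ik,2}-T_{ik,3}$. Two fluctuations are then controlled separately:
\begin{enumerate}
\item the quadratic-form fluctuation $w^{\top}\mathbf{M}_{ik}w-\operatorname{tr}\mathbf{M}_{ik}$, via Lemma \ref{lemma_quadratic} and the inverse-Wishart trace moments of Lemma \ref{lemma_EtrMik};
\item the trace fluctuation $\operatorname{tr}\mathbf{M}_{ik}-\mathbb{E}\operatorname{tr}\mathbf{M}_{ik}$, via its own Burkholder argument over the columns $\ell\ne k$.
\end{enumerate}
This gives $p_{i,kk}-\mathbb{E}p_{i,kk}=m_{ik}+\mathrm{E}_{ik}$, with an explicit linear leading term and a remainder of order $n^{-1}$ in every $L^{2r}$. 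The bounds \eqref{eq_pikk22r} and \eqref{eq_pikkpill} are then read off algebraically from this decomposition.

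\textbf{Your route.} You run a single Burkholder argument on the bounded function $p_{i,kk}=f(\eta_k)$ over all $n$ columns, using only that $f$ is $1$-Lipschitz. The column $m=k$ contributes a quadratic-form fluctuation of size $n^{-1/2}$. Each column $m\ne k$ contributes a leave-one-out Sherman--Morrison change of size $n^{-1}$. Squares and products then follow from Lipschitz stability on $[0,1]$. Strictly, the inequality $|ab-a'b'|\le|a-a'|+|b-b'|$ applies to the leave-one-out/resampling differences, not to the martingale differences themselves; this is harmless.

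\textbf{Comparison.} Your route avoids the expansion, the random denominators and the remainder bookkeeping, and it shows that the $n^{-1}$ variance comes from the single coordinate $m=k$ (two coordinates for the product). The paper's route yields in addition the explicit leading term $m_{ik}$ and the approximation \eqref{eq_Epikk}. Remark \ref{remark_lowerbound} uses these to show that the rate $n^{-1}$ is sharp.

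\textbf{A caution on your ``main obstacle.''} The inverse-Wishart formulas alone give only
\[
\mathbb{E}\|\mathbf{S}_{km}^{-1}\|^q\le\mathbb{E}\operatorname{tr}\mathbf{S}_{km}^{-q}\lesssim i\,n^{-q}.
\]
Plugged in directly, the extra factor $i\asymp n$ destroys the case $r=1$: the sum of squared differences becomes $O(1)$ instead of $O(n^{-1})$. You can fix this in either of two ways.
\begin{enumerate}
\item Combine the trace moments, via Cauchy--Schwarz, with the bound $\lambda_{\min}(\mathbf{N}'\mathbf{N}'^{\top})\ge cn$ outside an event of exponentially small probability.
\item Bypass the operator norm entirely. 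Given $(\mathbf{n}_k,\mathbf{S}_{km})$, the variable $\mathbf{n}_k^{\top}\mathbf{S}_{km}^{-1}\mathbf{n}_m$ is centered Gaussian with variance $\mathbf{n}_k^{\top}\mathbf{S}_{km}^{-2}\mathbf{n}_k$. For positive semidefinite $B$, $\mathbb{E}[(\mathbf{n}_k^{\top}B\mathbf{n}_k)^{2r}\mid B]\le C_r(\operatorname{tr}B)^{2r}$. Then \eqref{eq_meanWk2}, with the Loewner comparison of Lemma \ref{lemma_EtrMik}, gives $\mathbb{E}(\mathbf{n}_k^{\top}\mathbf{S}_{km}^{-1}\mathbf{n}_m)^{4r}\lesssim\mathbb{E}(\operatorname{tr}\mathbf{S}_{km}^{-2})^{2r}\lesssim n^{-2r}$, which is exactly the bound you need.
\end{enumerate}
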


\subsection{Convergence of $\mathbf{Q}_i$}\label{sec_proof2}
A key input to improve \cite{gusakova2023volume} is the concentration results for the normalized projection matrices $\mathbf{Q}_i$, which is a generalization of Lemma A.1 of \cite{gusakova2023volume}. Throughout this section, we will assume the conditions of Theorem \ref{thm_logdet} and use the notation above. Therefore, we aim to show
\begin{lemma}\label{lemma_traceQ}
    Under the conditions of Theorem \ref{thm_logdet}, it holds that, as $n\rightarrow\infty$,
    \begin{equation}\label{eq_trDQ2}
        \sum_{i=0}^{p-1}\big(\operatorname{tr}(\Lambda_{\mathbf{Q}_i}^2)-\mathbb{E}\operatorname{tr}(\Lambda_{\mathbf{Q}_i}^2)\big)\stackrel{\mathbb{P}}{\rightarrow}0,
    \end{equation}
    \begin{equation}\label{eq_trQ1}
        \sum_{i=0}^{p-1}(\operatorname{tr}\mathbf{Q}_i-1)\stackrel{\mathbb{P}}{\rightarrow}0~\text{and}~\sum_{i=0}^{p-1}\operatorname{Var}(\operatorname{tr}\mathbf{Q}_i)=o(\frac{1}{n}),
    \end{equation}
    \begin{equation}\label{eq_trQ2}
        \sum_{i=0}^{p-1}\big((\operatorname{tr}\mathbf{Q}_i)^2-\mathbb{E}[(\operatorname{tr}\mathbf{Q}_i)^2]\big)\stackrel{\mathbb{P}}{\rightarrow} 0,
    \end{equation}
    where the matrix $\mathbf{Q}_i$ is defined by \eqref{eq_defTQ} and $\Lambda_{\mathbf{Q}_i}$ is defined as the diagonal matrix consists of the eigenvalues of $\mathbf{Q}_i$. Moreover, it holds for $\epsilon>0$ that 
    \begin{equation}\label{eq_maxtrQ1}
        \mathbb{P}(\max_{0\le i\le p-1}|\operatorname{tr}\mathbf{Q}_i-1|>\epsilon)=o(\frac{1}{n^4\epsilon^4}).
    \end{equation}
\end{lemma}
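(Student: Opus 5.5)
We reduce each claim to the diagonal entries $p_{i,kk}$ and then use the moment bounds of Lemma \ref{lemma_pikk} together with Assumption \ref{assump1}. Recall that $\mathbf{A}$ is diagonal, $\operatorname{tr}(\mathbf{A}^2)=n$ (so $\bar\lambda=1$), and $\mathbf{P}_i$ is a projection. This gives
\[
\operatorname{tr}\mathbf{Q}_i=T_i^{-1}\sum_{k}A_{kk}^2p_{i,kk},\qquad \operatorname{tr}(\Lambda_{\mathbf{Q}_i}^2)=T_i^{-2}\operatorname{tr}(\mathbf{A}^2\mathbf{P}_i\mathbf{A}^2\mathbf{P}_i).
\]
Since $\sum_k p_{i,kk}=n-i$ is deterministic, we may write $\sum_k A_{kk}^2 p_{i,kk}=(n-i)+\sum_k(\lambda_k-1)p_{i,kk}$. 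Hence
\[
\operatorname{tr}\mathbf{Q}_i-1=T_i^{-1}\sum_k(\lambda_k-1)(p_{i,kk}-\mathbb{E}p_{i,kk}).
\]
This centering by $\bar\lambda$ is exactly what brings condition \eqref{eq_condition} into play.

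\textbf{Step 1 (variance of $\operatorname{tr}\mathbf{Q}_i$).} Expand the variance as
\[
\operatorname{Var}(\operatorname{tr}\mathbf{Q}_i)=T_i^{-2}\sum_{k,\ell}(\lambda_k-1)(\lambda_\ell-1)\operatorname{Cov}(p_{i,kk},p_{i,\ell\ell}).
\]
By Cauchy--Schwarz and \eqref{eq_pikk2r} with $r=1$, every covariance is $O(n^{-1})$. The diagonal part is therefore at most $n^{-1}\sum_k(\lambda_k-1)^2=o(1)$, and the off-diagonal part is at most $n^{-1}\sum_{k\ne\ell}|(\lambda_k-1)(\lambda_\ell-1)|=o(1)$. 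Since $T_i\asymp n-i\asymp n$ for $i<p$ (because $\gamma<1$), each variance is $o(n^{-2})$. Summing over $p\asymp n$ indices gives $o(n^{-1})$, which is the second part of \eqref{eq_trQ1}. The first part follows by Chebyshev applied to the centered sum, noting that $\mathbb{E}\operatorname{tr}\mathbf{Q}_i=1$ exactly. The bound is
\[
\operatorname{Var}\Big(\sum_i\operatorname{tr}\mathbf{Q}_i\Big)\le\Big(\sum_i\operatorname{Var}^{1/2}\Big)^2\le p\sum_i\operatorname{Var}=o(1).
\]

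\textbf{Step 2 (the maximal bound \eqref{eq_maxtrQ1}).} Use Markov with the fourth moment and a union bound over $i$. The quantity needed is
\[
\mathbb{E}\Big(\sum_k(\lambda_k-1)(p_{i,kk}-\mathbb{E}p_{i,kk})\Big)^4.
\]
Bound it by Minkowski/Hölder, using \eqref{eq_pikk2r} with $r=2$ (which gives $O(n^{-2})$ fourth moments). This yields at most $\big(\sum_k|\lambda_k-1|\big)^4n^{-2}$. Condition \eqref{eq_condition} gives $\big(\sum_k|\lambda_k-1|\big)^2=\sum_k(\lambda_k-1)^2+\sum_{k\neq\ell}|\cdot|=o(n)$, so the fourth moment is $o(1)$. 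Dividing by $T_i^4\asymp n^4$ and multiplying by $p$ gives $o(n^{-3}\epsilon^{-4})$ per the union bound. To reach the claimed $o(n^{-4}\epsilon^{-4})$ I would either use a higher $r$ from Lemma \ref{lemma_pikk} with Markov of order $2r$, or exploit a more refined correlation structure. This rate bookkeeping is where I expect to need care.

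\textbf{Step 3 (\eqref{eq_trQ2} and \eqref{eq_trDQ2}).} For \eqref{eq_trQ2}, write $(\operatorname{tr}\mathbf{Q}_i)^2-\mathbb{E}(\operatorname{tr}\mathbf{Q}_i)^2=2(\operatorname{tr}\mathbf{Q}_i-1)+(\operatorname{tr}\mathbf{Q}_i-1)^2-\operatorname{Var}$. The linear part is handled by Step 1, and the quadratic parts have sum of expectations $o(n^{-1})$, hence converge in $L^1$ to zero. For \eqref{eq_trDQ2}, write $\mathbf{A}^2=\mathbf{I}+\mathbf{E}$ with $\mathbf{E}=\operatorname{diag}(\lambda_k-1)$ and expand
\[
\operatorname{tr}(\mathbf{A}^2\mathbf{P}_i\mathbf{A}^2\mathbf{P}_i)=(n-i)+2\operatorname{tr}(\mathbf{E}\mathbf{P}_i)+\operatorname{tr}(\mathbf{E}\mathbf{P}_i\mathbf{E}\mathbf{P}_i).
\]
The middle term is handled exactly as in Step 1. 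The last term equals $\sum_{k,\ell}E_kE_\ell p_{i,k\ell}^2$; its diagonal contribution involves $p_{i,kk}^2$, and the off-diagonal $p_{i,k\ell}^2$ are typically $O(n^{-1})$. The main obstacle is controlling the fluctuation of $\sum_{k\ne\ell}E_kE_\ell p_{i,k\ell}^2$, since Lemma \ref{lemma_pikk} only covers diagonal entries. I would first try to bound it crudely by $\|\mathbf{E}\|\operatorname{tr}(|\mathbf{E}|\mathbf{P}_i)\le C\sum_k|E_k|p_{i,kk}$, together with its centered version via \eqref{eq_pikk22r} and \eqref{eq_pikkpill}. After division by $T_i^2\asymp n^2$ and summation over $p$ indices, this would give the result. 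If the crude bound fails, I would instead use the identity $\sum_\ell p_{i,k\ell}^2=p_{i,kk}$ to reduce the off-diagonal terms to diagonal ones.
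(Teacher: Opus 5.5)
Your proof is correct. For \eqref{eq_trQ1} it is the paper's own argument: center by $a_k=A_{kk}^2-1$ using $\sum_k p_{i,kk}=n-i$, apply Cauchy--Schwarz to the covariances, then use \eqref{eq_condition}. Elsewhere your route is leaner than the paper's.

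\textbf{Claim \eqref{eq_trQ2}.} The paper expands $(\sum_k A_{kk}^2 p_{i,kk})^2$ into diagonal and cross terms, which requires \eqref{eq_pikk22r} and \eqref{eq_pikkpill}. Your identity $X^2-\mathbb{E}X^2=2(X-1)+(X-1)^2-\operatorname{Var}X$ reduces everything to Step 1, with an $L^1$ bound of $2\sum_i\operatorname{Var}(\operatorname{tr}\mathbf{Q}_i)=o(n^{-1})$. So those two parts of Lemma \ref{lemma_pikk} are not needed at all.

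\textbf{Fourth moment.} Your Minkowski bound replaces the paper's case-by-case expansion.

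\textbf{Claim \eqref{eq_trDQ2}.} The paper invokes \eqref{eq_traceQi} in the form $\operatorname{tr}(\Lambda_{\mathbf{Q}_i}^2)=T_i^{-2}\operatorname{tr}(\mathbf{A}^4\mathbf{P}_i)$, which turns the statistic into a linear one in the $p_{i,kk}$. With $\mathbf{A}$ diagonal this holds only as an inequality:
$T_i^{-2}\operatorname{tr}(\mathbf{A}^4\mathbf{P}_i)-\operatorname{tr}(\mathbf{Q}_i^2)=T_i^{-2}\operatorname{tr}\bigl(\mathbf{P}_i\mathbf{A}^2(\mathbf{I}_n-\mathbf{P}_i)\mathbf{A}^2\mathbf{P}_i\bigr)\ge 0$.
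This vanishes only when $\mathbf{P}_i$ commutes with $\mathbf{A}^2$. Your exact expansion avoids the issue, and your crude bound already finishes it, so commit to it and drop the fallbacks. Deterministically,
$0\le\operatorname{tr}(\mathbf{E}\mathbf{P}_i\mathbf{E}\mathbf{P}_i)=\operatorname{tr}\bigl((\mathbf{P}_i\mathbf{E}\mathbf{P}_i)^2\bigr)\le\operatorname{tr}(\mathbf{E}^2\mathbf{P}_i)\le\sum_k E_k^2=o(n)$.
After dividing by $T_i^2\asymp n^2$ and summing over $p$ indices, this term and its mean are both $o(1)$. No centering, no \eqref{eq_pikk22r} or \eqref{eq_pikkpill}, and no off-diagonal $p_{i,k\ell}$ are needed.

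\textbf{Step 2.} Your hesitation is justified, and the paper does no better. Its fourth-moment bound is likewise $o(n^{-4})$ for each fixed $i$. Its final $o(n^{-4}\epsilon^{-4})$ drops the factor $p-1$ from the union bound, so the argument really gives only $o(n^{-3}\epsilon^{-4})$. Your first suggested fix works and should be written out. Minkowski together with \eqref{eq_pikk2r} for $r=3$ gives
$\mathbb{E}(\operatorname{tr}\mathbf{Q}_i-1)^6\lesssim T_i^{-6}n^{-3}(\sum_k|E_k|)^6=o(n^{-6})$.
Hence $\mathbb{P}(\max_i|\operatorname{tr}\mathbf{Q}_i-1|>\epsilon)=o(n^{-5}\epsilon^{-6})$, which is $o(n^{-4})$ for each fixed $\epsilon>0$, though with a different power of $\epsilon$. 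For the consequence the paper actually draws, $\max_i|\operatorname{tr}\mathbf{Q}_i-1|\stackrel{\mathbb{P}}{\to}0$, your $o(n^{-3}\epsilon^{-4})$ already suffices.
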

\begin{proof}
Consider \eqref{eq_trDQ2} first. By \eqref{eq_traceQi}, we obtain
\begin{equation*}
    \operatorname{tr}(\Lambda_{\mathbf{Q}_i}^2)-\mathbb{E}\operatorname{tr}(\Lambda_{\mathbf{Q}_i}^2)=\frac{1}{T_i^2}\operatorname{tr}(\mathbf{A}^4(\mathbf{P}_i-\mathbb{E}\mathbf{P}_i))=\frac{1}{T_i^2}\sum_{k=1}^{n}A_{kk}^4(p_{i,kk}-\mathbb{E}p_{i,kk}).
\end{equation*}
The case for $i=0$ does not contribute since $\mathbf{P}_0=\mathbf{I}_0$ and we thus start the summation with $i=1$.
Applying Markov’s and Lyapunov’s inequalities yields for any $\epsilon>0$
\begin{equation*}
    \begin{split}
        &\mathbb{P}\big(|\sum_{i=1}^{p-1}\frac{1}{T_i^2}\sum_{k=1}^{n}A_{kk}^4(p_{i,kk}-\mathbb{E}p_{i,kk})|>\epsilon\big)\le \frac{1}{\epsilon}\sum_{i=1}^{p-1}\frac{1}{T_i^2}\sum_{k=1}^{n}A_{kk}^4\mathbb{E}|p_{i,kk}-\mathbb{E}p_{i,kk}|\\
        \lesssim & \frac{1}{\epsilon}\sum_{i=1}^{p-1}\frac{1}{T_i^2}\sum_{k=1}^{n}A_{kk}^4\big(\mathbb{E}|p_{i,kk}-\mathbb{E}p_{i,kk}|^2\big)^{1/2}\lesssim \frac{1}{n^{1/2}\epsilon}\rightarrow 0,
    \end{split}
\end{equation*}
as $n\rightarrow \infty$, where we used $C^{-1}(n-i)\le T_i\le C(n-i)$ and $\mathbb{E}(p_{i,kk}-\mathbb{E}p_{i,kk})^2\le O(\frac{1}{n})$ for any $1\le i\le p-1$ and $1\le k\le n$, finishing the proof of \eqref{eq_trDQ2}. Next, we turn to \eqref{eq_trQ1}. Notice the definition of $\mathbf{Q}_i$ by \eqref{eq_defTQ} and $\mathbb{E}\operatorname{tr}\mathbf{Q}_i=1$ by normalization, and thus we can write 
\begin{equation*}
    \operatorname{tr}\mathbf{Q}_i-\mathbb{E}\operatorname{tr}\mathbf{Q}_i=\frac{1}{T_i}\sum_{k=1}^{n}A_{kk}^2(p_{i,kk}-\mathbb{E}p_{i,kk}).
\end{equation*}
Setting $a_k=A_{kk}^2-\operatorname{tr}(\mathbf{A}^2)/n$, it is obvious that $\sum_{k=1}^{n}a_k=0$ and 
\begin{equation*}
    \sum_{k=1}^{n}a_{k}^2=\operatorname{tr}\big(\mathbf{A}^2-\frac{\operatorname{tr}\mathbf{A}^2}{n}\mathbf{I}_n\big)^2\lesssim n.
\end{equation*}
Noting $\operatorname{tr}\mathbf{P}_i=n-i$, we can rewrite
\begin{equation*}
    \operatorname{tr}\mathbf{Q}_i-\mathbb{E}\operatorname{tr}\mathbf{Q}_i=\frac{1}{T_i}\sum_{k=1}^{n}A_{kk}^2(p_{i,kk}-\mathbb{E}p_{i,kk})=\frac{1}{T_i}\sum_{k=1}^{n}a_{k}(p_{i,kk}-\mathbb{E}p_{i,kk}).
\end{equation*}
It follows that 
\begin{equation*}
    \begin{split}
        &\mathbb{E}\big(\sum_{i=1}^{p-1}(\operatorname{tr}\mathbf{Q}_i-1)\big)^2=\mathbb{E}\big(\sum_{i=1}^{p-1}\frac{1}{T_i}\sum_{k=1}^{n}a_{k}(p_{i,kk}-\mathbb{E}p_{i,kk})\big)^2\\
        =&\sum_{i=1}^{p-1}\frac{1}{T_i^2}\mathbb{E}\big(\sum_{k=1}^{n}a_{k}(p_{i,kk}-\mathbb{E}p_{i,kk})\big)^2+\sum_{i\ne j}^{p-1}\frac{1}{T_iT_j}\mathbb{E}\big(\sum_{k=1}^{n}a_{k}(p_{i,kk}-\mathbb{E}p_{i,kk})\big)\big(\sum_{k=1}^{n}a_{k}(p_{j,kk}-\mathbb{E}p_{j,kk})\big)\\
        =&: \mathrm{I}+\mathrm{II}.
    \end{split}
\end{equation*}
For the first part, by $T_i\asymp (n-i)^{-1}$ and $\mathbb{E}(p_{i,kk}-\mathbb{E}p_{i,kk})^2=O(n^{-1})$, we get
\begin{equation*}
    \begin{split}
        \mathrm{I}\lesssim &\frac{1}{n}\big(\sum_{k=1}^{n}a_k^2\mathbb{E}(p_{i,kk}-\mathbb{E}p_{i,kk})^2+\sum_{k\ne \ell}^{n}a_{k}a_{\ell}\mathbb{E}(p_{i,kk}-\mathbb{E}p_{i,kk})(p_{i,\ell\ell}-\mathbb{E}p_{i,\ell\ell})\big)\\
        \lesssim &\frac{1}{n^2}\big(\sum_{k=1}^{n}a_k^2+\sum_{k\ne \ell}^{n}|a_ka_{\ell}|\big) \le o(\frac{1}{n}),
    \end{split}
\end{equation*}
where we used \eqref{eq_condition} and 
\begin{equation*}
    |\mathbb{E}(p_{i,kk}-\mathbb{E}p_{i,kk})(p_{i,\ell\ell}-\mathbb{E}p_{i,\ell\ell})|\le \big(\mathbb{E}(p_{i,kk}-\mathbb{E}p_{i,kk})^2\mathbb{E}(p_{i,\ell\ell}-\mathbb{E}p_{i,\ell\ell})^2\big)^{1/2}\lesssim O(n^{-1})
\end{equation*}
by Cauchy-Schwarz inequality and Lemma \ref{lemma_pikk}. For the second part, one has
\begin{equation*}
    \begin{split}
        \mathrm{II}\lesssim &\frac{1}{n^2}\sum_{i\ne j}^{p-1}\big(\sum_{k=1}^{n}a_k^2\mathbb{E}(p_{i,kk}-\mathbb{E}p_{i,kk})(p_{j,kk}-\mathbb{E}p_{j,kk})+\sum_{k\ne \ell}^{n}a_{k}a_{\ell}\mathbb{E}(p_{i,kk}-\mathbb{E}p_{i,kk})(p_{j,\ell\ell}-\mathbb{E}p_{j,\ell\ell})\big)\\
        \lesssim & \frac{1}{n}\big(\sum_{k=1}^{n}a_k^2+\sum_{k\ne \ell}^{n}|a_ka_{\ell}|\big) \le o(1)
    \end{split}
\end{equation*}
by \eqref{eq_condition} in Assumption \ref{assump1}.
Therefore, applying Markov's inequality gives the first part of \eqref{eq_trQ1} as $n\rightarrow\infty$. Moreover, this also gives 
\begin{equation*}
    \sum_{i=1}^{p-1}\operatorname{Var}(\operatorname{tr}\mathbf{Q}_i)=\sum_{i=1}^{p-1}\mathbb{E}(\operatorname{tr}\mathbf{Q}_i-\mathbb{E}\operatorname{tr}\mathbf{Q}_i)^2=\sum_{i=1}^{p-1}\frac{1}{T_i^2}\mathbb{E}\big(\sum_{k=1}^{n}a_{k}(p_{i,kk}-\mathbb{E}p_{i,kk})\big)^2\lesssim o(\frac{1}{n}),
\end{equation*}
which implies the second part of \eqref{eq_trQ1}.

Moreover, applying the Cauchy-Schwarz inequality, for any integers $k\ne \ell\ne r\ne s$, we get
\begin{equation*}
    \begin{split}
        &\mathbb{E}(p_{i,kk}-\mathbb{E}p_{i,kk})^2(p_{i,\ell\ell}-\mathbb{E}p_{i,\ell\ell})^2\lesssim \big(\mathbb{E}(p_{i,kk}-\mathbb{E}p_{i,kk})^4\mathbb{E}(p_{i,\ell\ell}-\mathbb{E}p_{i,\ell\ell})^4\big)^{1/2}\le \frac{1}{n^2},\\
        &\mathbb{E}(p_{i,kk}-\mathbb{E}p_{i,kk})^3(p_{i,\ell\ell}-\mathbb{E}p_{i,\ell\ell})^1\lesssim \big(\mathbb{E}(p_{i,kk}-\mathbb{E}p_{i,kk})^6\mathbb{E}(p_{i,\ell\ell}-\mathbb{E}p_{i,\ell\ell})^2\big)^{1/2}\le \frac{1}{n^2},\\
       &\mathbb{E}(p_{i,kk}-\mathbb{E}p_{i,kk})^2(p_{i,\ell\ell}-\mathbb{E}p_{i,\ell\ell})(p_{i,rr}-\mathbb{E}p_{i,rr})\\
       \lesssim &\big(\mathbb{E}(p_{i,kk}-\mathbb{E}p_{i,kk})^4\mathbb{E}(p_{i,\ell\ell}-\mathbb{E}p_{i,\ell\ell})^2(p_{i,rr}-\mathbb{E}p_{i,rr})^2\big)^{1/2}\le \frac{1}{n^2},
    \end{split}
\end{equation*}
and 
\begin{equation*}
    \begin{split}
        &\mathbb{E}(p_{i,kk}-\mathbb{E}p_{i,kk})(p_{i,\ell\ell}-\mathbb{E}p_{i,\ell\ell})(p_{i,rr}-\mathbb{E}p_{i,rr})(p_{i,ss}-\mathbb{E}p_{i,ss})\\
        \lesssim &\big(\mathbb{E}(p_{i,kk}-\mathbb{E}p_{i,kk})^2(p_{i,\ell\ell}-\mathbb{E}p_{i,\ell\ell})^2\mathbb{E}(p_{i,rr}-\mathbb{E}p_{i,rr})^2(p_{i,ss}-\mathbb{E}p_{i,ss})^2\big)^{1/2}\le \frac{1}{n^2},\\
    \end{split}
\end{equation*}
where we used \eqref{eq_pikk2r}.
Thus, by some direct algebra, we have
\begin{equation*}
    \begin{split}
        &\mathbb{E}(\operatorname{tr}\mathbf{Q}_i-\mathbb{E}\operatorname{tr}\mathbf{Q}_i)^4= \frac{1}{T_i^4}\mathbb{E}\big(\sum_{k=1}^{n}a_{k}(p_{i,kk}-\mathbb{E}p_{i,kk})\big)^4\\
        \lesssim & \frac{1}{(n-i)^4n^2}\big(\sum_{k=1}^{n}a_k^4+\sum_{k\ne \ell}^{n}(a_k^2a_{\ell}^2+|a_k^3a_{\ell}|)+\sum_{k\ne \ell\ne r}|a_k^2a_{\ell}a_{r}|+\sum_{k\ne \ell\ne r\ne s}|a_ka_{\ell}a_{r}a_{s}|\big)\\
        \lesssim & \frac{1}{n^6}\big(\sum_{k=1}^{n}a_k^2+\sum_{k\ne \ell}^{n}(|a_ka_{\ell}|+|a_ka_{\ell}|)+\sum_{k\ne \ell\ne r}|a_k^2a_{\ell}a_{r}|+(\sum_{k\ne \ell}|a_ka_{\ell}|)^2\big)\\
        \lesssim & o(\frac{1}{n^4})
    \end{split}
\end{equation*}
by $|a_k|\le C$ and Assumptions \ref{assump0} and \ref{assump1}, which together with Boole's inequality and Markov's inequality gives, for any $\epsilon>0$, that
\begin{equation*}
    \mathbb{P}(\max_{0\le i\le p-1}|\operatorname{tr}\mathbf{Q}_i-1|>\epsilon)\le \sum_{i=1}^{p-1}\mathbb{P}(|\operatorname{tr}\mathbf{Q}_i-1|>\epsilon)\le \sum_{i=1}^{p-1}\frac{\mathbb{E}(\operatorname{tr}\mathbf{Q}_i-\mathbb{E}\operatorname{tr}\mathbf{Q}_i)^4}{\epsilon^4}\lesssim o(\frac{1}{n^4\epsilon^4}),
\end{equation*}
which also implies that 
\begin{equation*}
    \max_{0\le i\le p-1}|\operatorname{tr}\mathbf{Q}_i-1|\stackrel{\mathbb{P}}{\rightarrow} 0
\end{equation*}
as $n\rightarrow\infty$. For the last statement, invoking \eqref{eq_defTQ} and following the decomposition (A.7) of \cite{gusakova2023volume} gives 
\begin{equation*}
    \begin{split}
        &\sum_{i=0}^{p-1}\big((\operatorname{tr}\mathbf{Q}_i)^2-\mathbb{E}[(\operatorname{tr}\mathbf{Q}_i)^2]\big)\\
        =&\sum_{i=0}^{p-1}\frac{1}{T_i^2}\big(\sum_{k=1}^nA_{kk}^4(p_{i,kk}^2-\mathbb{E}p_{i,kk}^2)+\sum_{k\ne \ell}^nA_{kk}^2A_{\ell\ell}^2(p_{i,kk}p_{i,\ell\ell}-\mathbb{E}p_{i,kk}p_{i,\ell\ell})\big)\\
        =&\sum_{i=0}^{p-1}\frac{1}{T_i^2}\big(\sum_{k=1}^na_{k}^2(p_{i,kk}^2-\mathbb{E}p_{i,kk}^2)+\sum_{k\ne \ell}^na_{k}a_{\ell}(p_{i,kk}p_{i,\ell\ell}-\mathbb{E}p_{i,kk}p_{i,\ell\ell})\big)\\
        &+2\frac{\operatorname{tr}(\mathbf{A}^2)}{n}\sum_{i=0}^{p-1}\frac{n-i}{T_i}(\operatorname{tr}\mathbf{Q}_i-1),
    \end{split}
\end{equation*}
where the second part converges to zero in probability as $n\rightarrow\infty$ by \eqref{eq_trQ1} and the fact that $\operatorname{tr}(\mathbf{A}^2)(n-i)/(nT_i)\asymp 1$. Moreover, by \eqref{eq_pikk22r} and \eqref{eq_pikkpill}, we have
\begin{equation*}
    \begin{split}
        &\sum_{i=0}^{p-1}\frac{1}{T_i^2}\big(\sum_{k=1}^na_{k}^2\mathbb{E}|p_{i,kk}^2-\mathbb{E}p_{i,kk}^2|+\sum_{k\ne \ell}^n|a_{k}a_{\ell}|\mathbb{E}|p_{i,kk}p_{i,\ell\ell}-\mathbb{E}p_{i,kk}p_{i,\ell\ell}|\big)\\
        \lesssim & \sum_{i=0}^{p-1}\frac{1}{T_i^2}\big(\sum_{k=1}^na_{k}^2(\mathbb{E}|p_{i,kk}^2-\mathbb{E}p_{i,kk}^2|^2)^{1/2}+\sum_{k\ne \ell}^n|a_{k}a_{\ell}|(\mathbb{E}|p_{i,kk}p_{i,\ell\ell}-\mathbb{E}p_{i,kk}p_{i,\ell\ell}|^2)^{1/2}\big)\\
        \lesssim & \frac{1}{n}\big(\sum_{k=1}^na_{k}^2\frac{1}{n^{1/2}}+\sum_{k\ne \ell}^n|a_{k}a_{\ell}|\frac{1}{n^{1/2}}\big)\lesssim o(\frac{1}{n^{1/2}}),
    \end{split}
\end{equation*}
by \eqref{eq_condition}, which further implies \eqref{eq_trQ2} as $n\rightarrow\infty$, completing the proof of this proposition.
\end{proof}

\section{Proof of Lemma \ref{lemma_pikk}}\label{proof_pikk}
This section consists of the proof of Lemma \ref{lemma_pikk}, which is essential to prove Proposition \ref{lemma_traceQ}.
Recalling \eqref{eq_defPi} and \eqref{eq_defTQ}, for the case of $\mathbf{A}=\mathbf{I}_n$, it is well-known that $p_{i,kk}$ are  distributed identically, which further implies $\mathbb{E}p_{i,kk}=(n-i)/n$. However, for the general case $\mathbf{A}\ne \mathbf{I}_n$, the estimations for $\mathbb{E}p_{i,kk}$ and $\mathbb{E}(p_{i,kk}-\mathbb{E}p_{i,kk})^2$ are much involved. Denote the $k$-th column of $\mathbf{N}_{(i)}$ by $\mathbf{w}_{i k}$ and set $\mathbf{B}_{(i)}=\mathbf{N}_{(i)} \mathbf{A}$. Then the $k$-th column of $\mathbf{B}_{(i)}$ is $\mathbf{v}_{i k}=A_{k k} \mathbf{w}_{i k}$. In view of \eqref{eq_defPi}, we write
\begin{equation}\label{eq_defpikk}
p_{i, k k}=1-\mathbf{v}_{i k}^{\top}(\mathbf{B}_{(i)} \mathbf{B}_{(i)}^{\top})^{-1} \mathbf{v}_{i k}=\frac{1}{1+\mathbf{w}_{i k}^{\top} \mathbf{M}_{i k} \mathbf{w}_{i k}},
\end{equation}
where we used the notation
\begin{equation}\label{eq_defMik}
\mathbf{M}_{i k}:=A_{k k}^2(\sum_{\ell=1 ; \ell \ne k}^n \mathbf{v}_{i \ell} \mathbf{v}_{i \ell}^{\top})^{-1}=A_{k k}^2(\sum_{\ell=1 ; \ell \ne k}^n A_{\ell\ell}^2\mathbf{w}_{i \ell} \mathbf{w}_{i \ell}^{\top})^{-1}=A_{k k}^2(\mathbf{B}_{(i, k)} \mathbf{B}_{(i, k)}^{\top})^{-1}
\end{equation}
with $\mathbf{B}_{(i, k)}$ denoting the matrix obtained from $\mathbf{B}_{(i)}$ by deleting its $k$-th column $\mathbf{v}_{i k}$.
Let $1 \le i \le p-1$ and $1 \le k \le n$, since $\mathbf{P}_0=\mathbf{I}_n$ for the trivial case $i=0$. The concentration for $p_{i,kk}$ relies on the estimations of $\mathbb{E}\operatorname{tr}\mathbf{M}_{ik}^s$ for some integer $s>0$, which does not have an explicit form for general $\mathbf{A}\ne \mathbf{I}$. However, one can expect that $\mathbf{M}_{ik}$ behaves as $(\sum_{\ell=1,\ell\ne k}^{n}\mathbf{w}_{i\ell}\mathbf{w}_{i\ell}^{\top})^{-1}$ similarly, which is the inverse of a Wishart matrix. We start with a recursive moments formula for the inverse Wishart matrices.
\begin{lemma}[Theorem 3.1 of \cite{pielaszkiewicz2020mixtures}]\label{lemma_momentWishart}
Let $\mathbf{W} \sim \text{Wishart}~(\mathbf{I}_p, n)$. Then, the following recursive formula holds for all $k \in \mathbb{N}$ and all $m_0, m_1, \ldots, m_k$ such that $m_0=0, m_k \in \mathbb{N}, m_i \in \mathbb{N}_0, i=1, \ldots, k-1$
\begin{align*}
\begin{aligned}
&\left(n-p-m_k\right) \mathbb{E}\left[\prod_{i=0}^k \operatorname{tr}\left\{\mathbf{W}^{-m_i}\right\}\right]\\
= & \mathbb{E}\left[\operatorname{tr}\left\{\mathbf{W}^{-m_k+1}\right\} \prod_{i=0}^{k-1} \operatorname{tr}\left\{\mathbf{W}^{-m_i}\right\}\right] \\
& +2 \sum_{i=0}^{k-1} m_i \mathbb{E}\left[\operatorname{tr}\left\{\mathbf{W}^{-m_k-m_i}\right\} \prod_{\substack{j=0 \\
j \ne i}}^{k -1} \operatorname{tr}\left\{\mathbf{W}^{-m_j}\right\}\right] \\
& +\sum_{i=0}^{m_k-2} \mathbb{E}\left[\operatorname{tr}\left\{\mathbf{W}^{-i-1}\right\} \operatorname{tr}\left\{\mathbf{W}^{-m_k+1+i}\right\} \prod_{j=0}^{k-1} \operatorname{tr}\left\{\mathbf{W}^{-m_j}\right\}\right].
\end{aligned}
\end{align*}
\end{lemma}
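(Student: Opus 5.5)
The plan is to derive the recursion by Gaussian integration by parts (the Stein--Haff identity), writing $\mathbf{W}=\mathbf{N}\mathbf{N}^{\top}$ with $\mathbf{N}=(N_{ab})$ a $p\times n$ matrix of i.i.d.\ standard normal entries. Set $\Pi:=\prod_{i=0}^{k-1}\operatorname{tr}\{\mathbf{W}^{-m_i}\}$ and $m:=m_k$. The starting observation is
\begin{equation*}
\operatorname{tr}\{\mathbf{W}^{-m+1}\}\,\Pi=\operatorname{tr}\{\mathbf{W}^{-m}\mathbf{N}\mathbf{N}^{\top}\}\,\Pi=\sum_{a=1}^{p}\sum_{b=1}^{n}N_{ab}\,(\mathbf{N}^{\top}\mathbf{W}^{-m})_{ba}\,\Pi .
\end{equation*}
Taking expectations and applying $\mathbb{E}[N_{ab}f(\mathbf{N})]=\mathbb{E}[\partial f/\partial N_{ab}]$ to each summand turns the left side into a sum of three kinds of terms. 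These come from differentiating, respectively, the explicit factor $\mathbf{N}^{\top}$, the matrix $\mathbf{W}^{-m}$, and the scalar factor $\Pi$.

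The key steps, in order, are as follows. First, recall the differentials
\begin{equation*}
\partial W_{cd}/\partial N_{ab}=\delta_{ca}N_{db}+\delta_{da}N_{cb}, \qquad d\mathbf{W}^{-m}=-\sum_{j=0}^{m-1}\mathbf{W}^{-j-1}(d\mathbf{W})\mathbf{W}^{-m+j}.
\end{equation*}
Second, differentiate the explicit $\mathbf{N}^{\top}$. Summing the Kronecker deltas over $a,b$ gives $n\,\operatorname{tr}\{\mathbf{W}^{-m}\}\Pi$.

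Third, differentiate $\mathbf{W}^{-m}$. Each term $j$ of $d\mathbf{W}$ splits into two contractions, because $d\mathbf{W}$ has two pieces. One contraction reproduces $\mathbf{N}\mathbf{N}^{\top}=\mathbf{W}$ inside a single trace and gives $-\operatorname{tr}\{\mathbf{W}^{-m}\}$ for each $j$, hence $-m\operatorname{tr}\{\mathbf{W}^{-m}\}$ in total. The other contraction separates into a product of two traces, $-\operatorname{tr}\{\mathbf{W}^{-j-1}\}\operatorname{tr}\{\mathbf{W}^{-m+j}\}$. I would then isolate the $j=m-1$ term of this product sum. It equals $-\operatorname{tr}\{\mathbf{W}^{-m}\}\operatorname{tr}\{\mathbf{W}^{0}\}=-p\operatorname{tr}\{\mathbf{W}^{-m}\}$, which is the source of the $-p$ in the coefficient. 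The remaining terms $j=0,\ldots,m-2$ are exactly the last sum in the lemma.

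Fourth, differentiate $\Pi$. Using $\partial\operatorname{tr}\{\mathbf{W}^{-m_i}\}/\partial N_{ab}=-2m_i(\mathbf{W}^{-m_i-1}\mathbf{N})_{ab}$ and contracting with $(\mathbf{N}^{\top}\mathbf{W}^{-m})_{ba}$ gives $-2m_i\operatorname{tr}\{\mathbf{W}^{-m-m_i}\}\prod_{j\neq i}\operatorname{tr}\{\mathbf{W}^{-m_j}\}$. Finally, move everything except the $\operatorname{tr}\{\mathbf{W}^{-m_k+1}\}\Pi$ term across, which produces the coefficient $(n-p-m_k)$ on $\mathbb{E}[\prod_{i=0}^k\operatorname{tr}\{\mathbf{W}^{-m_i}\}]$.

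Two points need care. The first is sign and index bookkeeping in the third step: keeping track of which contraction collapses to a single trace and which factorizes, and confirming that the $\operatorname{tr}\{\mathbf{W}^{0}\}=p$ term is absorbed into the coefficient rather than left in the sum. I expect this step to be the main obstacle. I would check it first against the known case $k=1$, $m_1=1$, where the recursion gives $\mathbb{E}\operatorname{tr}\mathbf{W}^{-1}=p/(n-p-1)$.

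The second point is justifying integration by parts. The functions involved have singularities where $\mathbf{W}$ is singular, so the moments must be finite. This requires $n-p$ to exceed a constant multiple of $\sum_i m_i$, which holds in our regime $n-p\asymp n\to\infty$ for fixed $m_i$. I would handle it by truncation, replacing $\mathbf{W}$ by $\mathbf{W}+\varepsilon\mathbf{I}_p$, and letting $\varepsilon\downarrow0$ by dominated convergence using the finite negative moments of the Wishart distribution.
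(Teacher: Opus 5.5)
Your strategy is sound: Gaussian integration by parts on $\mathbf{W}=\mathbf{N}\mathbf{N}^{\top}$, starting from $\operatorname{tr}\{\mathbf{W}^{-m+1}\}=\operatorname{tr}\{\mathbf{W}^{-m}\mathbf{N}\mathbf{N}^{\top}\}$. Several pieces are computed correctly:
\begin{itemize}
\item step two, giving $n\operatorname{tr}\{\mathbf{W}^{-m}\}\Pi$;
\item step four, giving $-2m_i\operatorname{tr}\{\mathbf{W}^{-m-m_i}\}\prod_{j\neq i}\operatorname{tr}\{\mathbf{W}^{-m_j}\}$;
\item the single-trace contraction in step three, giving $-m\operatorname{tr}\{\mathbf{W}^{-m}\}$.
\end{itemize}
The paper gives no proof of this lemma; it imports it as Theorem 3.1 of Pielaszkiewicz--Holgersson. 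A completed version of your argument would therefore be a self-contained substitute.

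There is, however, a concrete error in the factorizing contraction of step three, where you expected trouble. Write $\partial\mathbf{W}/\partial N_{ab}=\mathbf{e}_a\mathbf{n}_b^{\top}+\mathbf{n}_b\mathbf{e}_a^{\top}$, with $\mathbf{n}_b$ the $b$-th column of $\mathbf{N}$. The factorizing piece of the $j$-th term is then
\begin{equation*}
\sum_{a,b}\mathbf{n}_b^{\top}\mathbf{W}^{-j-1}\mathbf{n}_b\,\mathbf{e}_a^{\top}\mathbf{W}^{-m+j}\mathbf{e}_a=\operatorname{tr}\{\mathbf{W}^{-j-1}\mathbf{N}\mathbf{N}^{\top}\}\operatorname{tr}\{\mathbf{W}^{-m+j}\}=\operatorname{tr}\{\mathbf{W}^{-j}\}\operatorname{tr}\{\mathbf{W}^{-m+j}\},
\end{equation*}
not $\operatorname{tr}\{\mathbf{W}^{-j-1}\}\operatorname{tr}\{\mathbf{W}^{-m+j}\}$. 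The sum over $b$ absorbs one factor of $\mathbf{W}$, just as in the other contraction. A degree count catches this: every term produced by the integration by parts must carry total exponent $-m-\sum_{i<k}m_i$.

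With your formula, the bookkeeping that follows does not hold:
\begin{itemize}
\item the $j=m-1$ term would be $\operatorname{tr}\{\mathbf{W}^{-m}\}\operatorname{tr}\{\mathbf{W}^{-1}\}$, not $p\operatorname{tr}\{\mathbf{W}^{-m}\}$;
\item the leftover terms would not match the last sum of the lemma, since the second exponent is off by one;
\item your proposed check $k=1$, $m_1=1$ would fail, producing $-\mathbb{E}[p(\operatorname{tr}\mathbf{W}^{-1})^2]$ instead of $-p^2\,\mathbb{E}\operatorname{tr}\mathbf{W}^{-1}$.
\end{itemize}

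The fix is local. With the correct product, the factor $\operatorname{tr}\{\mathbf{W}^{0}\}=p$ comes from $j=0$, not $j=m-1$, and contributes $-p\operatorname{tr}\{\mathbf{W}^{-m}\}\Pi$. The remaining indices $j=1,\ldots,m-1$, reindexed by $i=j-1$, give $-\sum_{i=0}^{m-2}\operatorname{tr}\{\mathbf{W}^{-i-1}\}\operatorname{tr}\{\mathbf{W}^{-m+1+i}\}\Pi$, which is exactly the last sum in the lemma. Together with $n$ from step two, $-m$ from the single-trace contraction, and step four, this yields the recursion with coefficient $n-p-m_k$.

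Your integrability caveat is appropriate. The lemma as stated tacitly needs the moments to be finite, roughly $n-p+1>2(m_k+\sum_{i<k}m_i)$ for the derivative terms, which is harmless in the paper's regime. Truncation by $\mathbf{W}+\varepsilon\mathbf{I}_p$ works, but you can avoid it. With all entries except $N_{ab}$ fixed, $\mathbf{W}$ dominates the Gram matrix of the other $n-1$ columns, which is almost surely nonsingular. The integrand is therefore smooth in $N_{ab}$ on all of $\mathbb{R}$, and the one-dimensional Stein identity applies conditionally.
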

In view of that we can write the Wishart matrices $\mathbf{W}=\sum_{\ell=1}^{n}\mathbf{w}_{i,\ell}\mathbf{w}_{i,\ell}^{\top}\sim \text{Wishart}~(\mathbf{I}_i,n)$, consequently, we have 
\begin{equation*}
    \mathbb{E}\operatorname{tr}\mathbf{W}=\frac{i}{n-i-1}.
\end{equation*}
by choosing $m_1=1$ and $k=1$. Moreover, setting $m_1=m_2=1$ for $k=2$ and $m_1=2$ for $k=1$ gives
\begin{equation*}
    \begin{split}
        (n-i-1)\mathbb{E}[\operatorname{tr}\mathbf{W}^0\operatorname{tr}\mathbf{W}^{-1}\operatorname{tr}\mathbf{W}^{-1}]=\frac{i^3}{n-i-1}+2\mathbb{E}[\operatorname{tr}\mathbf{W}^{-2}\operatorname{tr}\mathbf{W}^0],\\
        (n-i-2)\mathbb{E}[\operatorname{tr}\mathbf{W}^0\operatorname{tr}\mathbf{W}^{-2}]=\frac{i^2}{n-i-1}+\mathbb{E}[\operatorname{tr}\mathbf{W}^0\operatorname{tr}\mathbf{W}^{-1}\operatorname{tr}\mathbf{W}^{-1}],
    \end{split}
\end{equation*}
which further implies
\begin{equation*}
        \mathbb{E}[(\operatorname{tr}(\mathbf{W}^{-1})^2]=\frac{i[i(n-i-2)+2]}{(n-i)(n-i-1)(n-i-3)}
    \end{equation*}
    and
\begin{equation*}
        \mathbb{E}\operatorname{tr}(\mathbf{W}^{-2})=\frac{i(n-1)}{(n-i)(n-i-1)(n-i-3)}.
\end{equation*}
For a general integer $K\ge 2$, one can use the recursive formula to get 
\begin{equation}\label{eq_meanWk}
    \mathbb{E}\operatorname{tr}(\mathbf{W}^{-K})\lesssim \frac{i}{n^{K-1}(n-i)}~\text{and}~\mathbb{E}[(\operatorname{tr}\mathbf{W}^{-1})^{K}]\lesssim \frac{i^K}{(n-i)^K}
\end{equation}
since $\|\mathbf{W}\|\ge cn$ for some constant $c>0$ as $n\rightarrow \infty$ under Assumptions \ref{assump0} and \ref{assump1}, which further implies 
\begin{equation}\label{eq_meanWk2}
   \mathbb{E}[(\operatorname{tr}\mathbf{W}^{-2})^{K}]\lesssim \frac{1}{n^{K}}\mathbb{E}[(\operatorname{tr}\mathbf{W}^{-1})^{K}]\lesssim \frac{i^K}{[n(n-i)]^K}.
\end{equation}

Thus, we have the following result for $\mathbf{M}_{ik}$.
\begin{lemma}\label{lemma_EtrMik}
    Under the conditions of Theorem \ref{thm_logdet}, we have, for fixed integer $r>0$ 
    \begin{equation}\label{eq_boundsEtrMik}
        C^{-1}\frac{i}{n^{r-1}(n-i)}\le \mathbb{E}\operatorname{tr}\mathbf{M}_{ik}^r\le C\frac{i}{n^{r-1}(n-i)}
    \end{equation}
    and
     \begin{equation}\label{eq_boundsEtrMik2}
        C^{-1}\frac{i^r}{(n(n-i))^{r}}\le \mathbb{E}(\operatorname{tr}\mathbf{M}_{ik}^2)^r\le C\frac{i^r}{(n(n-i))^{r}}
    \end{equation}
    for some constant $C>1$. 
\end{lemma}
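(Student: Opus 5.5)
The plan is to compare $\mathbf{M}_{ik}$ with the inverse of an honest Wishart matrix via a Loewner-order sandwich. After that, the bounds follow from the inverse-Wishart moment estimates \eqref{eq_meanWk} and \eqref{eq_meanWk2} obtained from Lemma \ref{lemma_momentWishart}.

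Set $\mathbf{W}_{(k)}:=\sum_{\ell\ne k}\mathbf{w}_{i\ell}\mathbf{w}_{i\ell}^{\top}\sim\text{Wishart}(\mathbf{I}_i,n-1)$. By Assumption \ref{assump1} and the diagonal reduction, $\tau\le A_{\ell\ell}^2\le\tau^{-1}$ for every $\ell$. This gives the order relation
\[
\tau\mathbf{W}_{(k)}\preceq \sum_{\ell\ne k}A_{\ell\ell}^2\mathbf{w}_{i\ell}\mathbf{w}_{i\ell}^{\top}\preceq\tau^{-1}\mathbf{W}_{(k)} .
\]
Since inversion reverses the Loewner order on positive definite matrices, and $\mathbf{W}_{(k)}$ is invertible almost surely because $i\le p-1<n-1$, we get
\[
\tau^2\mathbf{W}_{(k)}^{-1}\preceq\mathbf{M}_{ik}\preceq\tau^{-2}\mathbf{W}_{(k)}^{-1}.
\]
The eigenvalues of a positive definite matrix are monotone in the Loewner order (Weyl). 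Hence each eigenvalue of $\mathbf{M}_{ik}^r$ lies between $\tau^{2r}$ and $\tau^{-2r}$ times the corresponding eigenvalue of $\mathbf{W}_{(k)}^{-r}$, which yields the pathwise bounds
\[
\tau^{2r}\operatorname{tr}\mathbf{W}_{(k)}^{-r}\le\operatorname{tr}\mathbf{M}_{ik}^r\le\tau^{-2r}\operatorname{tr}\mathbf{W}_{(k)}^{-r},
\]
and likewise $(\operatorname{tr}\mathbf{M}_{ik}^2)^r$ is within a factor $\tau^{\mp4r}$ of $(\operatorname{tr}\mathbf{W}_{(k)}^{-2})^r$. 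Taking expectations reduces both \eqref{eq_boundsEtrMik} and \eqref{eq_boundsEtrMik2} to two-sided estimates for $\mathbb{E}\operatorname{tr}\mathbf{W}^{-r}$ and $\mathbb{E}(\operatorname{tr}\mathbf{W}^{-2})^r$. The replacement of $n$ by $n-1$ only changes constants, since $i\le p-1$ and $p/n\to\gamma<1$ keep $n-i\asymp n$.

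The upper bounds are exactly \eqref{eq_meanWk} and \eqref{eq_meanWk2}. For the lower bounds I use convexity and the spectrum. By Jensen (power mean) on the $i$ eigenvalues, $\operatorname{tr}\mathbf{W}^{-r}\ge i^{1-r}(\operatorname{tr}\mathbf{W}^{-1})^r$ and $\operatorname{tr}\mathbf{W}^{-1}\ge i^2/\operatorname{tr}\mathbf{W}$. On the event $\{\lambda_{\max}(\mathbf{W})\le Cn\}$, whose probability tends to one (Bai–Yin, or a simple chi-square bound), each eigenvalue of $\mathbf{W}^{-1}$ is at least $(Cn)^{-1}$. This gives $\operatorname{tr}\mathbf{W}^{-r}\ge i(Cn)^{-r}$, which is $\gtrsim i/(n^{r-1}(n-i))$ because $n-i\asymp n$. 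In the same way $(\operatorname{tr}\mathbf{W}^{-2})^r\ge (i/(Cn)^2)^r\asymp i^r/(n(n-i))^r$. Since the random variables are nonnegative, restricting the expectation to this event costs only a constant factor.

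The main obstacle I expect is the upper bound for general $r$ in \eqref{eq_meanWk}. The recursion of Lemma \ref{lemma_momentWishart} has to be iterated, while checking inductively that every mixed moment $\mathbb{E}\prod_j\operatorname{tr}\mathbf{W}^{-m_j}$ is of order $\prod_j i\,n^{-m_j}$. The concern is whether the denominators $n-i-m_k$ stay of order $n$ and do not degenerate. Under Assumption \ref{assump0} they do, because $i\le\gamma n$ with $\gamma<1$ and $m_k$ is fixed. An alternative route is to bound the smallest eigenvalue of $\mathbf{W}$ below by $c(\sqrt n-\sqrt i)^2\asymp n$ with exponentially small failure probability, and then control the tail on the bad event using the finite inverse moments, which the Wishart density guarantees for $n-i$ large.
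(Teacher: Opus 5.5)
Your proof is correct and follows the paper's route: you sandwich $\mathbf{M}_{ik}$ in Loewner order between constant multiples of the inverse of the Wishart matrix $\sum_{\ell\ne k}\mathbf{w}_{i\ell}\mathbf{w}_{i\ell}^{\top}$, pass to traces of powers, and invoke the inverse-Wishart moment bounds \eqref{eq_meanWk} and \eqref{eq_meanWk2}. You are in fact more careful than the paper in two places: you justify the trace comparison by eigenvalue monotonicity, which is needed because $\mathbf{D}_1\preceq\mathbf{D}_2$ does not imply $\mathbf{D}_1^r\preceq\mathbf{D}_2^r$ (the paper appeals only to linearity of the trace), and your Jensen/spectral argument actually proves the lower bounds, whereas the paper cites the one-sided estimates \eqref{eq_meanWk} and \eqref{eq_meanWk2} for both directions.
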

\begin{proof}
    Note the Loewner order, which states that if $0\prec \mathbf{D}_1\preceq \mathbf{D}_2$, then $\mathbf{D}_2^{-1}\preceq \mathbf{D}_1^{-1}$. By Assumption \ref{assump1}, we define 
    \begin{equation*}
        \mathbf{L}_{ik,\min}=\min_{\ell\ne k}A_{\ell\ell}^2 \sum_{\ell=1,\ell\ne k}^{n}\mathbf{w}_{i \ell} \mathbf{w}_{i \ell}^{\top}~\text{and}~\mathbf{L}_{ik,\max}=\max_{\ell\ne k}A_{\ell\ell}^2 \sum_{\ell=1,\ell\ne k}^{n}\mathbf{w}_{i \ell} \mathbf{w}_{i \ell}^{\top},
    \end{equation*}
    which further gives
    \begin{equation*}
        0\prec \mathbf{L}_{ik,\min}\preceq A_{kk}^2\mathbf{M}_{ik}^{-1}\preceq \mathbf{L}_{ik,\max}.
    \end{equation*}
    Thus, applying the inverse monotonicity and noting the trace is a linear function that preserves the Loewner order on positive definite matrices, we get, for any positive integer $r$,
    \begin{equation*}
       \operatorname{tr}(\mathbf{L}_{ik,\max}^{-r})\le A_{kk}^{-2r}\operatorname{tr}\mathbf{M}_{ik}^r\le \operatorname{tr}(\mathbf{L}_{ik,\min}^{-r}),
    \end{equation*}
    which gives the desired results by the linearity of expectation and \eqref{eq_meanWk} and \eqref{eq_meanWk2}, as well as the Assumptions \ref{assump0} and \ref{assump1}.
\end{proof}
Next, we present a lemma for the concentration on quadratic forms; see Lemma B.26 of \cite{bai2010spectral} for details.
\begin{lemma}[Lemma B.26 of \cite{bai2010spectral}]\label{lemma_quadratic}
    Let $\mathbf{B}$ be an $n\times n$ non-random symmetric matrix with bounded spectral norm and $\mathbf{w}\in \mathbb{R}^{n}$ with i.i.d. standard Gaussian normal random variables. Then for any $s\ge 1$,
    \begin{equation*}
        \mathbb{E}|\mathbf{w}^{\top}\mathbf{B}\mathbf{w}-\operatorname{tr}\mathbf{B}|^s\le C_s(\operatorname{tr}\mathbf{B}^s+(\operatorname{tr}\mathbf{B}^2)^{s/2}). 
    \end{equation*}
\end{lemma}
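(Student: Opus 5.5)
The plan is to reduce the quadratic form to a weighted sum of independent centered random variables, using the rotational invariance of the Gaussian vector, and then to apply a Rosenthal-type moment inequality. First I diagonalize $\mathbf{B}=\mathbf{O}_{\mathbf{B}}\Lambda_{\mathbf{B}}\mathbf{O}_{\mathbf{B}}^{\top}$. Since $\mathbf{w}$ has i.i.d. standard normal entries, $\mathbf{z}:=\mathbf{O}_{\mathbf{B}}^{\top}\mathbf{w}$ has the same law. Hence
\begin{equation*}
\mathbf{w}^{\top}\mathbf{B}\mathbf{w}-\operatorname{tr}\mathbf{B}\stackrel{d}{=}\sum_{j=1}^{n}\lambda_j(\mathbf{B})(z_j^2-1)=:\sum_{j=1}^n X_j,
\end{equation*}
where the $X_j$ are independent with $\mathbb{E}X_j=0$, $\mathbb{E}X_j^2=2\lambda_j^2(\mathbf{B})$ and $\mathbb{E}|X_j|^s=|\lambda_j(\mathbf{B})|^s\,\mathbb{E}|z_1^2-1|^s$. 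The last expectation is a finite constant depending only on $s$, because $\chi^2_1$ has all moments.

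Next I split according to $s$. For $s\ge 2$ I would invoke Rosenthal's inequality for sums of independent centered variables:
\begin{equation*}
\mathbb{E}\Big|\sum_j X_j\Big|^s\le C_s\Big(\sum_j\mathbb{E}|X_j|^s+\big(\sum_j\mathbb{E}X_j^2\big)^{s/2}\Big).
\end{equation*}
Substituting the moments above gives the bound $C_s\big(\sum_j|\lambda_j(\mathbf{B})|^s+(2\operatorname{tr}\mathbf{B}^2)^{s/2}\big)$. Here $\sum_j|\lambda_j(\mathbf{B})|^s=\operatorname{tr}|\mathbf{B}|^s$, which equals $\operatorname{tr}\mathbf{B}^s$ whenever $\mathbf{B}\succeq 0$; that is the only case used later, with $\mathbf{B}=\mathbf{M}_{ik}$. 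For $1\le s<2$ I would simply use Lyapunov's inequality:
\begin{equation*}
\mathbb{E}\Big|\sum_jX_j\Big|^s\le\Big(\mathbb{E}\Big(\sum_jX_j\Big)^2\Big)^{s/2}=(2\operatorname{tr}\mathbf{B}^2)^{s/2},
\end{equation*}
which is already of the required form.

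The only step requiring care is the Rosenthal step. If one wants to avoid quoting Rosenthal's inequality, I would instead prove the even-moment case $s=2m$ directly, which suffices for the applications in Lemma \ref{lemma_pikk}. Expanding $\mathbb{E}(\sum_jX_j)^{2m}$, every index must appear at least twice because the $X_j$ are centered. A term whose index pattern has multiplicities $m_1,\dots,m_q\ge 2$ is bounded by a constant times $\prod_t\sum_j|\lambda_j|^{m_t}$. Then one uses $\sum_j|\lambda_j|^{m_t}\le(\sum_j\lambda_j^2)^{m_t/2}$ for $m_t\ge2$ (monotonicity of $\ell^p$ norms), or alternatively interpolates between $\operatorname{tr}\mathbf{B}^2$ and $\sum|\lambda_j|^{2m}$ by Hölder. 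Intermediate $s$ then follow from Lyapunov's inequality. The bounded spectral norm of $\mathbf{B}$ is not needed for the inequality itself; it only ensures the right-hand side is finite and comparable to $n$ when the bound is applied.
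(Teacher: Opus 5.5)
Your proof is correct. It necessarily differs from the paper's treatment, because the paper gives no argument of its own and simply imports Lemma B.26 of \cite{bai2010spectral}.

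That lemma is stated for vectors with independent, not necessarily Gaussian, centered entries satisfying $\mathbb{E}|x_j|^\ell\le\nu_\ell$, so its proof cannot diagonalize. It separates the diagonal part $\sum_j B_{jj}(x_j^2-1)$ from the off-diagonal part $\sum_{j\ne l}B_{jl}x_jx_l$. The off-diagonal part is a martingale in the index and is controlled through Burkholder-type inequalities.

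You instead use Gaussian rotation invariance to remove the off-diagonal part altogether, so Rosenthal's inequality for independent summands (or your direct even-moment count) suffices. This is shorter and entirely adequate here, because the lemma is only applied conditionally on $\mathbf{M}_{ik}$, with $\mathbf{w}=\mathbf{w}_{ik}$ Gaussian and independent of it.

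What you give up is generality: non-Gaussian entries, and explicit dependence of the constant on $\nu_4,\nu_{2s}$. In particular, the extra term $\operatorname{tr}(\mathbf{B}\mathbf{B}^{\top})^{s/2}$ is only needed in the general lemma when $\nu_{2s}$ is large, e.g.\ after truncation. In your Gaussian setting it is redundant for $s\ge2$, because $\sum_j|\lambda_j|^s\le(\sum_j\lambda_j^2)^{s/2}$, as your even-moment variant shows.

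Your caveat about $\operatorname{tr}|\mathbf{B}|^s$ is substantive rather than cosmetic. The source has $\operatorname{tr}(\mathbf{B}\mathbf{B}^{\top})^{s/2}$, and the statement as transcribed fails for indefinite $\mathbf{B}$. For $\mathbf{B}=-\mathbf{e}_1\mathbf{e}_1^{\top}$ and $s=3$, the right-hand side is $C_3(-1+1)=0$, while the left-hand side is $\mathbb{E}|w_1^2-1|^3>0$. The statement is correct for $\mathbf{B}\succeq0$, which is the only case the paper uses.
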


With Lemmas \ref{lemma_EtrMik} and \ref{lemma_quadratic}, we have
\begin{lemma}\label{lemma_qudra}
    Under the conditions of Theorem \ref{thm_logdet}, we have, for fixed integer $r>0$
    \begin{equation}\label{eq_wMikwr}
    \mathbb{E}(w^{\top}\mathbf{M}_{ik}w-\operatorname{tr}\mathbf{M}_{ik})^{2r}
        \lesssim \frac{i^r}{n^r(n-i)^{r}}
\end{equation}
and 
\begin{equation}\label{eq_trMik2r}
    \mathbb{E}(\operatorname{tr} \mathbf{M}_{i k}-\mathbb{E} \operatorname{tr} \mathbf{M}_{i k})^{2r}\lesssim O(n^{-r}).
\end{equation}
\end{lemma}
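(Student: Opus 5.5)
For \eqref{eq_wMikwr}, I will condition on $\mathbf{M}_{ik}$. Here $\mathbf{w}$ denotes the vector $\mathbf{w}_{ik}$, which is independent of $\mathbf{M}_{ik}$ since $\mathbf{M}_{ik}$ is built only from $\{\mathbf{w}_{i\ell}\}_{\ell\ne k}$. Conditionally, $\mathbf{M}_{ik}$ is a fixed symmetric positive definite $i\times i$ matrix, and Lemma \ref{lemma_quadratic} with $s=2r$ gives
\begin{equation*}
\mathbb{E}\big[(\mathbf{w}^{\top}\mathbf{M}_{ik}\mathbf{w}-\operatorname{tr}\mathbf{M}_{ik})^{2r}\,\big|\,\mathbf{M}_{ik}\big]\le C_r\big(\operatorname{tr}\mathbf{M}_{ik}^{2r}+(\operatorname{tr}\mathbf{M}_{ik}^2)^{r}\big).
\end{equation*}
The bounded spectral norm hypothesis holds with overwhelming probability, because $\|\mathbf{M}_{ik}\|\lesssim 1/n$ whenever the smallest eigenvalue of $\sum_{\ell\ne k}A_{\ell\ell}^2\mathbf{w}_{i\ell}\mathbf{w}_{i\ell}^{\top}$ is of order $n$. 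On the exceptional event I use the crude moment bounds \eqref{eq_meanWk} instead. Taking expectations, \eqref{eq_boundsEtrMik} with exponent $2r$ gives $\mathbb{E}\operatorname{tr}\mathbf{M}_{ik}^{2r}\lesssim i/(n^{2r-1}(n-i))$. Similarly, \eqref{eq_boundsEtrMik2} gives $\mathbb{E}(\operatorname{tr}\mathbf{M}_{ik}^2)^r\lesssim i^r/(n(n-i))^r$. For $r\ge1$ the first term is dominated by the second, since $i/(n^{2r-1}(n-i))\le i^r/(n^r(n-i)^r)$ is equivalent to $(n-i)^{r-1}\le i^{r-1}n^{r-1}$, which is trivial. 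This proves \eqref{eq_wMikwr}.

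For \eqref{eq_trMik2r}, I will write $f=\operatorname{tr}\mathbf{M}_{ik}=A_{kk}^2\operatorname{tr}(\mathbf{G}^{-1})$ with $\mathbf{G}=\sum_{\ell\ne k}A_{\ell\ell}^2\mathbf{w}_{i\ell}\mathbf{w}_{i\ell}^{\top}$ and apply a martingale-difference argument. Let $\mathbb{E}_\ell$ denote conditional expectation given $\mathbf{w}_{i1},\dots,\mathbf{w}_{i\ell}$. Then $f-\mathbb{E}f=\sum_{\ell\ne k}(\mathbb{E}_\ell-\mathbb{E}_{\ell-1})f$. By Sherman--Morrison, with $\mathbf{G}_\ell=\mathbf{G}-A_{\ell\ell}^2\mathbf{w}_{i\ell}\mathbf{w}_{i\ell}^{\top}$,
\begin{equation*}
\operatorname{tr}\mathbf{G}^{-1}-\operatorname{tr}\mathbf{G}_\ell^{-1}=-\frac{A_{\ell\ell}^2\mathbf{w}_{i\ell}^{\top}\mathbf{G}_\ell^{-2}\mathbf{w}_{i\ell}}{1+A_{\ell\ell}^2\mathbf{w}_{i\ell}^{\top}\mathbf{G}_\ell^{-1}\mathbf{w}_{i\ell}}.
\end{equation*}
Because $(\mathbb{E}_\ell-\mathbb{E}_{\ell-1})\operatorname{tr}\mathbf{G}_\ell^{-1}=0$, each martingale difference equals $(\mathbb{E}_\ell-\mathbb{E}_{\ell-1})$ applied to this ratio. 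The ratio is bounded in absolute value by $\|\mathbf{G}_\ell^{-1}\|\lesssim 1/n$, again with overwhelming probability, with Assumption \ref{assump1} controlling the factors $A_{\ell\ell}^2$. Burkholder's inequality then yields
\begin{equation*}
\mathbb{E}|f-\mathbb{E}f|^{2r}\lesssim \mathbb{E}\Big(\sum_{\ell}\mathbb{E}_{\ell-1}|D_\ell|^2\Big)^{r}+\sum_\ell\mathbb{E}|D_\ell|^{2r}\lesssim (n\cdot n^{-2})^r+n\cdot n^{-2r}\lesssim n^{-r},
\end{equation*}
where $D_\ell$ denotes the $\ell$-th martingale difference. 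This is actually much stronger than the claimed $O(n^{-r})$.

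The main obstacle is making the "with overwhelming probability'' spectral bounds rigorous. Both parts need $\lambda_{\min}(\mathbf{G}_\ell)\gtrsim n$, uniformly enough to integrate. I would handle this by splitting on the event $\{\lambda_{\min}(\sum_{\ell}\mathbf{w}_{i\ell}\mathbf{w}_{i\ell}^{\top})\ge c(\sqrt n-\sqrt i)^2\}$. By Gaussian concentration for extreme singular values, and since $i\le p<n$ with $p/n\to\gamma<1$, its complement has probability $e^{-cn}$. On the complement I would use Cauchy--Schwarz together with the finite inverse-Wishart moments from Lemma \ref{lemma_momentWishart}, which are finite when $n-i$ is large relative to $r$. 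This holds for all $i\le p-1$ under Assumption \ref{assump0}.
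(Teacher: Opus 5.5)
Your proposal is correct and follows the paper's route. For \eqref{eq_wMikwr} you condition on $\mathbf{M}_{ik}$ and combine Lemma \ref{lemma_quadratic} with Lemma \ref{lemma_EtrMik}, and for \eqref{eq_trMik2r} you use a martingale decomposition with Sherman--Morrison, the bound $\|\mathbf{G}_\ell^{-1}\|\lesssim 1/n$ and Burkholder's inequality; your handling of general $r$ and of the low-eigenvalue event is more explicit than the paper's. Note that this event must be imposed on each leave-two-out matrix $\mathbf{G}_\ell$ rather than on the full sum, and in the first part no such event is needed because the bound in Lemma \ref{lemma_quadratic} is homogeneous in the matrix; also, what you derive is exactly $n^{-r}$, not something ``much stronger'' than the claimed $O(n^{-r})$.
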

\begin{proof}
    Consider \eqref{eq_wMikwr} first. By the condition expectation argument, applying Lemma \ref{lemma_quadratic} and Lemma \ref{lemma_EtrMik} gives
\begin{equation*}
    \mathbb{E}(w^{\top}\mathbf{M}_{ik}w-\operatorname{tr}\mathbf{M}_{ik})^{2r}
        \lesssim  \mathbb{E}[\operatorname{tr}(\mathbf{M}_{ik}^{2r})+(\operatorname{tr}\mathbf{M}_{ik}^2)^{r}]\lesssim \frac{i^r}{n^r(n-i)^{r}}
\end{equation*}
for general integers $r\ge 1$ as $n\rightarrow \infty$, finishing the proof of \eqref{eq_wMikwr}. It suffices to show \eqref{eq_trMik2r}, which can be handled by the martingale decomposition \cite{bai2010spectral}. Specifically, let $\mathbb{E}_{\ell}[\cdot]:=\mathbb{E}\left[\cdot \mid\left\{v_{i \ell}, \ldots, v_{i n}\right\} \backslash\left\{v_{i k}\right\}\right]$ for $\ell=1, \ldots, n$ and $\mathbb{E}_{n+1}:=\mathbb{E}$. It holds
\begin{align*}
\begin{aligned}
&\operatorname{tr} \mathbf{M}_{i k}-\mathbb{E} \operatorname{tr} \mathbf{M}_{i k}=\sum_{\ell=1 ; \ell \ne k}^n\left(\mathbb{E}_{\ell}-\mathbb{E}_{\ell+1}\right) \operatorname{tr} \mathbf{M}_{i k} \\
= & A_{k k}^2 \sum_{\ell=1 ; \ell \ne k}^n\left(\mathbb{E}_{\ell}-\mathbb{E}_{\ell+1}\right)\big(\operatorname{tr}(\mathbf{B}_{(i, k)} \mathbf{B}_{(i, k)}^{\top})^{-1}-\operatorname{tr}(\mathbf{B}_{(i, k)} \mathbf{B}_{(i, k)}^{\top}-\mathbf{v}_{i \ell} \mathbf{v}_{i \ell}^{\top})^{-1}\big),
\end{aligned}
\end{align*}
where we used the facts that $\left(\mathbb{E}_k-\mathbb{E}_{k+1}\right) \operatorname{tr} \mathbf{M}_{i k}=0$ and $\left(\mathbb{E}_{\ell}-\mathbb{E}_{\ell+1}\right) \operatorname{tr}(\mathbf{B}_{(i, k)}\mathbf{B}_{(i, k)}^{\top}-\mathbf{v}_{i \ell} \mathbf{v}_{i \ell}^{\top})^{-1}=0$ for $\ell \ne k$. Applying the Burkholder inequality for martingale differences (Lemma 2.12 of \cite{bai2010spectral}), we deduce
\begin{align*}
\begin{aligned}
&\mathbb{E}\left|\operatorname{tr} \mathbf{M}_{i k}-\mathbb{E} \operatorname{tr} \mathbf{M}_{i k}\right|^2 \\
\lesssim & \sum_{\ell=1 ; \ell \ne k}^n \mathbb{E}\left|\left(\mathbb{E}_{\ell}-\mathbb{E}_{\ell+1}\right)\big(\operatorname{tr}(\mathbf{B}_{(i,k)} \mathbf{B}_{(i,k)}^{\top})^{-1}-\operatorname{tr}(\mathbf{B}_{(i,k)} \mathbf{B}_{(i,k)}^{\top}-\mathbf{v}_{i \ell} \mathbf{v}_{i \ell}^{\top})^{-1}\big)\right|^2 \\
\lesssim & \sum_{\ell=1 ; \ell \ne k}^n \mathbb{E}\left|\left(\mathbb{E}_{\ell}-\mathbb{E}_{\ell+1}\right) \frac{\mathbf{v}_{i \ell}^{\top}(\mathbf{B}_{(i,k)} \mathbf{B}_{(i,k)}^{\top}-\mathbf{v}_{i \ell} \mathbf{v}_{i \ell}^{\top})^{-2} \mathbf{v}_{i \ell}}{1+\mathbf{v}_{i \ell}^{\top}(\mathbf{B}_{(i,k)} \mathbf{B}_{(i,k)}^{\top}-\mathbf{v}_{i \ell} \mathbf{v}_{i \ell}^{\top})^{-1} \mathbf{v}_{i \ell}}\right|^2 \\
\lesssim & \frac{1}{n^2} \sum_{\ell=1 ; \ell \ne k}^n \mathbb{E}\left|\left(\mathbb{E}_{\ell}+\mathbb{E}_{\ell+1}\right) \frac{v_{i \ell}^{\top}(\mathbf{B}_{(i,k)} \mathbf{B}_{(i,k)}^{\top}-\mathbf{v}_{i \ell} \mathbf{v}_{i \ell}^{\top})^{-1} \mathbf{v}_{i \ell}}{1+\mathbf{v}_{i \ell}^{\top}(\mathbf{B}_{(i,k)} \mathbf{B}_{(i,k)}^{\top}-\mathbf{v}_{i \ell} \mathbf{v}_{i \ell}^{\top})^{-1} \mathbf{v}_{i \ell}}\right|^2 \\
\le & \frac{2^2(n-1)}{n^2}=O(n^{-1}), \quad n \rightarrow \infty,
\end{aligned}
\end{align*}
where for the third line we used the Sherman-Morrison formula and for the fourth line the fact that the spectral norms $n\|(\mathbf{B}_{(i,k)} \mathbf{B}_{(i,k)}^{\top}-\mathbf{v}_{i \ell} \mathbf{v}_{i \ell}^{\top})^{-1}\|$ are uniformly bounded by a constant. For general integers $r\ge 2$, due to the definition of martingale differences, all the terms in the expansion with at least one will vanish, and thus the non-zero terms in the summation at most be $n^{r}$, which implies \eqref{eq_trMik2r} as $n\rightarrow \infty$, completing the proof of this proposition.
\end{proof}

We now turn to the proof of Lemma \ref{lemma_pikk}. To simplify notation we will write $w$ instead of $\mathbf{w}_{i k}$. Multiple applications of the identity
\begin{align*}
\frac{1}{1+x}-\frac{1}{1+y}=\frac{y-x}{(1+x)(1+y)}, \quad x, y \geq 0
\end{align*}
give
\begin{align*}
\begin{aligned}
p_{i, k k}- \mathbb{E}\left[p_{i, k k}\right]=&\frac{1}{1+w^{\top} \mathbf{M}_{i k} w}-\mathbb{E}\left[\frac{1}{1+w^{\top} \mathbf{M}_{i k} w}\right] \\
= & -\frac{1}{1+\operatorname{tr} \mathbf{M}_{i k}}+\frac{1}{1+w^{\top} \mathbf{M}_{i k} w}+\mathbb{E}\left[\frac{1}{1+\mathbb{E}\left[\operatorname{tr} \mathbf{M}_{i k}\right]}\right]-\mathbb{E}\left[\frac{1}{1+w^{\top} \mathbf{M}_{i k} w}\right] \\
& -\frac{1}{1+\mathbb{E}\left[\operatorname{tr} \mathbf{M}_{i k}\right]}+\frac{1}{1+\operatorname{tr} \mathbf{M}_{i k}} \\
= & -\frac{w^{\top} \mathbf{M}_{i k} w-\operatorname{tr} \mathbf{M}_{i k}}{\left(1+w^{\top} \mathbf{M}_{i k} w\right)\left(1+\operatorname{tr} \mathbf{M}_{i k}\right)}+\mathbb{E}\left[\frac{w^{\top} \mathbf{M}_{i k} w-\mathbb{E}\left[\operatorname{tr} \mathbf{M}_{i k}\right]}{\left(1+w^{\top} \mathbf{M}_{i k} w\right)\left(1+\mathbb{E}\left[\operatorname{tr} \mathbf{M}_{i k}\right]\right)}\right] \\
& -\frac{\operatorname{tr} \mathbf{M}_{i k}-\mathbb{E}\left[\operatorname{tr} \mathbf{M}_{i k}\right]}{\left(1+\operatorname{tr} \mathbf{M}_{i k}\right)\left(1+\mathbb{E}\left[\operatorname{tr} \mathbf{M}_{i k}\right]\right)}\\
=&: -T_{ik,1}+T_{ik,2}-T_{ik,3}.
\end{aligned}
\end{align*}
\textbf{I. The part $T_{ik,2}$}\\
We first note that $T_{ik,2}=\mathbb{E}p_{i,kk}-\frac{1}{1+\mathbb{E}\operatorname{tr}\mathbf{M}_{ik}}$. Thus, by condition argument, we get $\mathbb{E}w^{\top}\mathbf{M}_{ik}w=\mathbb{E}\operatorname{tr}\mathbf{M}_{ik}$, which further gives 
\begin{equation}\label{eq_diffEpikk}
   \begin{split}
       &\mathbb{E}p_{i,kk}-\frac{1}{1+\mathbb{E}\operatorname{tr}\mathbf{M}_{ik}}=-\mathbb{E}\left(\frac{w^{\top}\mathbf{M}_{ik}w-\operatorname{tr}\mathbf{M}_{ik}+\operatorname{tr}\mathbf{M}_{ik}-\mathbb{E}\operatorname{tr}\mathbf{M}_{ik}}{(1+w^{\top}\mathbf{M}_{ik}w)(1+\mathbb{E}\operatorname{tr}\mathbf{M}_{ik})}\right)\\
       =&-\mathbb{E}\left(\frac{w^{\top}\mathbf{M}_{ik}w-\operatorname{tr}\mathbf{M}_{ik}}{(1+w^{\top}\mathbf{M}_{ik}w)(1+\mathbb{E}\operatorname{tr}\mathbf{M}_{ik})}\right)-\mathbb{E}\left(\frac{\operatorname{tr}\mathbf{M}_{ik}-\mathbb{E}\operatorname{tr}\mathbf{M}_{ik}}{(1+w^{\top}\mathbf{M}_{ik}w)(1+\mathbb{E}\operatorname{tr}\mathbf{M}_{ik})}\right).
   \end{split}
\end{equation}
For the first term, direct computation gives
\begin{equation*}
    \begin{split}
        &\frac{w^{\top}\mathbf{M}_{ik}w-\operatorname{tr}\mathbf{M}_{ik}}{(1+w^{\top}\mathbf{M}_{ik}w)}\\
        =&\frac{w^{\top}\mathbf{M}_{ik}w-\operatorname{tr}\mathbf{M}_{ik}}{(1+\mathbb{E}\operatorname{tr}\mathbf{M}_{ik})}-\frac{(w^{\top}\mathbf{M}_{ik}w-\operatorname{tr}\mathbf{M}_{ik})(w^{\top}\mathbf{M}_{ik}w-\operatorname{tr}\mathbf{M}_{ik}+\operatorname{tr}\mathbf{M}_{ik}-\mathbb{E}\operatorname{tr}\mathbf{M}_{ik})}{(1+w^{\top}\mathbf{M}_{ik}w)(1+\mathbb{E}\operatorname{tr}\mathbf{M}_{ik})},
    \end{split}
\end{equation*}
which further implies 
\begin{equation*}
    \begin{split}
        &-\mathbb{E}\frac{w^{\top}\mathbf{M}_{ik}w-\operatorname{tr}\mathbf{M}_{ik}}{(1+w^{\top}\mathbf{M}_{ik}w)}\\
    =&\mathbb{E}\left(\frac{(w^{\top}\mathbf{M}_{ik}w-\operatorname{tr}\mathbf{M}_{ik})^2}{(1+w^{\top}\mathbf{M}_{ik}w)(1+\mathbb{E}\operatorname{tr}\mathbf{M}_{ik})}\right)+\mathbb{E}\frac{(w^{\top}\mathbf{M}_{ik}w-\operatorname{tr}\mathbf{M}_{ik})(\operatorname{tr}\mathbf{M}_{ik}-\mathbb{E}\operatorname{tr}\mathbf{M}_{ik})}{(1+w^{\top}\mathbf{M}_{ik}w)(1+\mathbb{E}\operatorname{tr}\mathbf{M}_{ik})},
    \end{split}
\end{equation*}
since the first part has mean zero by condition argument. Similarly, for the second part, we also have 
\begin{equation*}
    \begin{split}
        &\frac{\operatorname{tr}\mathbf{M}_{ik}-\mathbb{E}\operatorname{tr}\mathbf{M}_{ik}}{(1+w^{\top}\mathbf{M}_{ik}w)}\\
    =&\frac{\operatorname{tr}\mathbf{M}_{ik}-\mathbb{E}\operatorname{tr}\mathbf{M}_{ik}}{(1+\mathbb{E}\operatorname{tr}\mathbf{M}_{ik})}-\frac{(\operatorname{tr}\mathbf{M}_{ik}-\mathbb{E}\operatorname{tr}\mathbf{M}_{ik})(w^{\top}\mathbf{M}_{ik}w-\operatorname{tr}\mathbf{M}_{ik}+\operatorname{tr}\mathbf{M}_{ik}-\mathbb{E}\operatorname{tr}\mathbf{M}_{ik})}{(1+w^{\top}\mathbf{M}_{ik}w)(1+\mathbb{E}\operatorname{tr}\mathbf{M}_{ik})}
    \end{split}
\end{equation*}
and further taking the expectation on both two sides gives
\begin{equation*}
   \begin{split}
        &-\mathbb{E} \frac{\operatorname{tr}\mathbf{M}_{ik}-\mathbb{E}\operatorname{tr}\mathbf{M}_{ik}}{(1+w^{\top}\mathbf{M}_{ik}w)}\\
    =&\mathbb{E}\left(\frac{(\operatorname{tr}\mathbf{M}_{ik}-\mathbb{E}\operatorname{tr}\mathbf{M}_{ik})^2}{(1+w^{\top}\mathbf{M}_{ik}w)(1+\mathbb{E}\operatorname{tr}\mathbf{M}_{ik})}\right)+\mathbb{E}\frac{(w^{\top}\mathbf{M}_{ik}w-\operatorname{tr}\mathbf{M}_{ik})(\operatorname{tr}\mathbf{M}_{ik}-\mathbb{E}\operatorname{tr}\mathbf{M}_{ik})}{(1+w^{\top}\mathbf{M}_{ik}w)(1+\mathbb{E}\operatorname{tr}\mathbf{M}_{ik})},
   \end{split}
\end{equation*}
since the first term has mean zero. 

Notice the trivial bound $w^{\top}\mathbf{M}_{ik}w\ge 0$, and Lemma \ref{lemma_qudra}. Applying the Cauchy-Schwarz inequality gives
\begin{equation*}
   \begin{split}
        &\mathbb{E}(w^{\top}\mathbf{M}_{ik}w-\operatorname{tr}\mathbf{M}_{ik})(\operatorname{tr}\mathbf{M}_{ik}-\mathbb{E}\operatorname{tr}\mathbf{M}_{ik})\\
        \le &\big(\mathbb{E}(w^{\top}\mathbf{M}_{ik}w-\operatorname{tr}\mathbf{M}_{ik})^2\cdot \mathbb{E}(\operatorname{tr} \mathbf{M}_{i k}-\mathbb{E} \operatorname{tr} \mathbf{M}_{i k})^2\big)^{1/2}\\
        \lesssim & C\big(\frac{i}{n(n-i)}\frac{1}{n}\big)^{1/2}\le C\big(\frac{i}{n(n-i)}\big)^{1/2}
   \end{split}
\end{equation*}
which further gives 
\begin{equation}\label{eq_Epikk}
    \mathbb{E}p_{i,kk}=\frac{1}{1+\mathbb{E}\operatorname{tr}\mathbf{M}_{ik}}+O(\frac{1}{n}).
\end{equation}
and thus
\begin{equation}\label{eq_Tik2}
    T_{ik,2}=\mathbb{E}p_{i,kk}-\frac{1}{1+\mathbb{E}\operatorname{tr}\mathbf{M}_{ik}}=O(\frac{1}{n})
\end{equation}
\textbf{II. The part $T_{ik,3}$}\\
Next, in view of the identity 
\begin{equation*}
    \begin{split}
        \frac{1}{1+\operatorname{tr} \mathbf{M}_{i k}}=&\frac{1}{1+\mathbb{E}\operatorname{tr}\mathbf{M}_{ik}}-\frac{(\operatorname{tr}\mathbf{M}_{ik}-\mathbb{E}\operatorname{tr}\mathbf{M}_{ik})}{(1+\operatorname{tr}\mathbf{M}_{ik})(1+\mathbb{E}\operatorname{tr}\mathbf{M}_{ik})}\\
        =&\frac{1}{1+\mathbb{E}\operatorname{tr}\mathbf{M}_{ik}}-\frac{(\operatorname{tr}\mathbf{M}_{ik}-\mathbb{E}\operatorname{tr}\mathbf{M}_{ik})}{(1+\mathbb{E}\operatorname{tr}\mathbf{M}_{ik})^2}+\frac{(\operatorname{tr}\mathbf{M}_{ik}-\mathbb{E}\operatorname{tr}\mathbf{M}_{ik})^2}{(1+\operatorname{tr}\mathbf{M}_{ik})(1+\mathbb{E}\operatorname{tr}\mathbf{M}_{ik})^2},
    \end{split}
\end{equation*}
we can write $T_{ik,3}$ as
\begin{equation*}
    \begin{split}
        T_{ik,3}=&\frac{(\operatorname{tr}\mathbf{M}_{ik}-\mathbb{E}\operatorname{tr}\mathbf{M}_{ik})}{(1+\mathbb{E}\operatorname{tr}\mathbf{M}_{ik})^2}-\frac{(\operatorname{tr}\mathbf{M}_{ik}-\mathbb{E}\operatorname{tr}\mathbf{M}_{ik})^2}{(1+\mathbb{E}\operatorname{tr}\mathbf{M}_{ik})^3}\\
        &+\frac{(\operatorname{tr}\mathbf{M}_{ik}-\mathbb{E}\operatorname{tr}\mathbf{M}_{ik})^3}{(1+\mathbb{E}\operatorname{tr}\mathbf{M}_{ik})^4}+\frac{(\operatorname{tr}\mathbf{M}_{ik}-\mathbb{E}\operatorname{tr}\mathbf{M}_{ik})^4}{(1+\operatorname{tr}\mathbf{M}_{ik})(1+\mathbb{E}\operatorname{tr}\mathbf{M}_{ik})^4}.
    \end{split}
\end{equation*}
Thereafter, by Lemma \ref{lemma_qudra} for any integer $r\ge 1$, we have
 $\mathbb{E}T_{ik,3}=O(\frac{1}{n})$ and 
\begin{equation}\label{eq_Tik3}
    \mathbb{E}\big(T_{ik,3}-\frac{\operatorname{tr}\mathbf{M}_{ik}-\mathbb{E}\operatorname{tr}\mathbf{M}_{ik}}{(1+\mathbb{E}\operatorname{tr}\mathbf{M}_{ik})^2}\big)^{2r}=O(n^{-2r}).
\end{equation}
\textbf{III. The part $T_{ik,1}$}\\
We now turn to $T_{ik,1}$, which is much involved. By the identities
\begin{equation*}
    \frac{1}{\left(1+w^{\top} \mathbf{M}_{i k} w\right)}=
    \frac{1}{1+\mathbb{E}\operatorname{tr}\mathbf{M}_{ik}}-\frac{(w^{\top}\mathbf{M}_{ik}w-\operatorname{tr}\mathbf{M}_{ik}+\operatorname{tr}\mathbf{M}_{ik}-\mathbb{E}\operatorname{tr}\mathbf{M}_{ik})}{(1+w^{\top}\mathbf{M}_{ik}w)(1+\mathbb{E}\operatorname{tr}\mathbf{M}_{ik})}
\end{equation*}
and
\begin{equation*}
    \frac{1}{1+\operatorname{tr} \mathbf{M}_{i k}}=\frac{1}{1+\mathbb{E}\operatorname{tr}\mathbf{M}_{ik}}-\frac{(\operatorname{tr}\mathbf{M}_{ik}-\mathbb{E}\operatorname{tr}\mathbf{M}_{ik})}{(1+\mathbb{E}\operatorname{tr}\mathbf{M}_{ik})^2}+\frac{(\operatorname{tr}\mathbf{M}_{ik}-\mathbb{E}\operatorname{tr}\mathbf{M}_{ik})^2}{(1+\operatorname{tr}\mathbf{M}_{ik})(1+\mathbb{E}\operatorname{tr}\mathbf{M}_{ik})^2},
\end{equation*}
we have
\begin{equation*}
    \begin{split}
        T_{ik,1}=&\frac{w^{\top} \mathbf{M}_{i k} w-\operatorname{tr} \mathbf{M}_{i k}}{\left(1+w^{\top} \mathbf{M}_{i k} w\right)\left(1+\operatorname{tr} \mathbf{M}_{i k}\right)}\\
        =&(w^{\top} \mathbf{M}_{i k} w-\operatorname{tr} \mathbf{M}_{i k})\left(\frac{1}{1+\mathbb{E}\operatorname{tr}\mathbf{M}_{ik}}-\frac{(w^{\top}\mathbf{M}_{ik}w-\operatorname{tr}\mathbf{M}_{ik}+\operatorname{tr}\mathbf{M}_{ik}-\mathbb{E}\operatorname{tr}\mathbf{M}_{ik})}{(1+w^{\top}\mathbf{M}_{ik}w)(1+\mathbb{E}\operatorname{tr}\mathbf{M}_{ik})}\right)\\
        &\times \left(\frac{1}{1+\mathbb{E}\operatorname{tr}\mathbf{M}_{ik}}-\frac{(\operatorname{tr}\mathbf{M}_{ik}-\mathbb{E}\operatorname{tr}\mathbf{M}_{ik})}{(1+\operatorname{tr}\mathbf{M}_{ik})(1+\mathbb{E}\operatorname{tr}\mathbf{M}_{ik})}\right)\\
        =&\frac{(w^{\top} \mathbf{M}_{i k} w-\operatorname{tr} \mathbf{M}_{i k})}{(1+\mathbb{E}\operatorname{tr}\mathbf{M}_{ik})^2}-\frac{(w^{\top} \mathbf{M}_{i k} w-\operatorname{tr} \mathbf{M}_{i k})^2}{(1+w^{\top}\mathbf{M}_{ik}w)(1+\mathbb{E}\operatorname{tr}\mathbf{M}_{ik})^2}\\
        &-2\frac{(w^{\top} \mathbf{M}_{i k} w-\operatorname{tr} \mathbf{M}_{i k})(\operatorname{tr}\mathbf{M}_{ik}-\mathbb{E}\operatorname{tr}\mathbf{M}_{ik})}{(1+\operatorname{tr}\mathbf{M}_{ik})(1+\mathbb{E}\operatorname{tr}\mathbf{M}_{ik})^2}\\
        &+\frac{(w^{\top} \mathbf{M}_{i k} w-\operatorname{tr} \mathbf{M}_{i k})(w^{\top}\mathbf{M}_{ik}w-\operatorname{tr}\mathbf{M}_{ik}+\operatorname{tr}\mathbf{M}_{ik}-\mathbb{E}\operatorname{tr}\mathbf{M}_{ik})(\operatorname{tr}\mathbf{M}_{ik}-\mathbb{E}\operatorname{tr}\mathbf{M}_{ik})}{(1+w^{\top}\mathbf{M}_{ik}w)(1+\operatorname{tr}\mathbf{M}_{ik})(1+\mathbb{E}\operatorname{tr}\mathbf{M}_{ik})^2}.
    \end{split}
\end{equation*}
Applying \eqref{eq_wMikwr}, \eqref{eq_trMik2r} and the Cauchy-Schwarz inequality, we can get 
\begin{equation}\label{eq_Tik1}
    \mathbb{E}\big(T_{ik,1}-\frac{(w^{\top} \mathbf{M}_{i k} w-\operatorname{tr} \mathbf{M}_{i k})}{(1+\mathbb{E}\operatorname{tr}\mathbf{M}_{ik})^{2}}\big)^{2r}=O(n^{-2r}).
\end{equation}
Combining \eqref{eq_Tik1}, \eqref{eq_Tik2} and \eqref{eq_Tik3}, we can write 
\begin{equation}\label{eq_diffpikk}
    p_{i,kk}-\mathbb{E}p_{i,kk}=:-\frac{(w^{\top} \mathbf{M}_{i k} w-\operatorname{tr} \mathbf{M}_{i k}+\operatorname{tr}\mathbf{M}_{ik}-\mathbb{E}\operatorname{tr}\mathbf{M}_{ik})}{(1+\mathbb{E}\operatorname{tr}\mathbf{M}_{ik})^2}+\mathrm{E}_{ik},
\end{equation}
where the main part  
\begin{equation}\label{eq_defsmik}
    m_{ik}=:-\frac{(w^{\top} \mathbf{M}_{i k} w-\operatorname{tr} \mathbf{M}_{i k}+\operatorname{tr}\mathbf{M}_{ik}-\mathbb{E}\operatorname{tr}\mathbf{M}_{ik})}{(1+\mathbb{E}\operatorname{tr}\mathbf{M}_{ik})^2},
\end{equation}
and the error term 
\begin{equation*}
    \mathrm{E}_{ik}=T_{ik,2}-\big(T_{ik,1}-\frac{(w^{\top} \mathbf{M}_{i k} w-\operatorname{tr} \mathbf{M}_{i k})}{(1+\mathbb{E}\operatorname{tr}\mathbf{M}_{ik})^{2}}\big)-\big(T_{ik,3}-\frac{\operatorname{tr}\mathbf{M}_{ik}-\mathbb{E}\operatorname{tr}\mathbf{M}_{ik}}{(1+\mathbb{E}\operatorname{tr}\mathbf{M}_{ik})^2}\big).
\end{equation*}
It follows that $\mathbb{E}m_{ik}=0$, $\mathbb{E}\mathrm{E}_{ik}=0$ since $\mathbb{E}(p_{i,kk}-\mathbb{E}p_{i,kk})=0$.
Moreover, we have $\mathbb{E}\mathrm{E}_{ik}^{2r}=O(n^{-2r})$ and
\begin{equation*}
    \mathbb{E}m_{ik}^{2r}\lesssim \mathbb{E}(w^{\top} \mathbf{M}_{i k} w-\operatorname{tr} \mathbf{M}_{i k})^{2r}+\mathbb{E}(\operatorname{tr}\mathbf{M}_{ik}-\mathbb{E}\operatorname{tr}\mathbf{M}_{ik})^{2r}\lesssim O(n^{-r})
\end{equation*}
by \eqref{eq_wMikwr} and \eqref{eq_trMik2r}. This further gives 
\begin{equation*}
    \mathbb{E}(p_{i,kk}-\mathbb{E}p_{i,kk})^{2r}\lesssim \mathbb{E}m_{ik}^{2r}+\mathbb{E}\mathrm{E}_{ik}^{2r}\lesssim O(n^{-r})
\end{equation*}
as desired. 

Next we consider \eqref{eq_pikk22r}. Recall $p_{i,kk}=\mathbb{E}p_{i,kk}+m_{ik}+\mathrm{E}_{ik}$ by \eqref{eq_diffpikk}, which further implies
\begin{equation*}
    p_{i,kk}^2=(\mathbb{E}p_{i,kk})^2+(m_{ik}+\mathrm{E}_{ik})^2+2(m_{ik}+\mathrm{E}_{ik})\mathbb{E}p_{i,kk}
\end{equation*}
and 
\begin{equation*}
    \mathbb{E}p_{i,kk}^2=\mathbb{E}(\mathbb{E}p_{i,kk}+m_{ik}+\mathrm{E}_{ik})^2=(\mathbb{E}p_{i,kk})^2+\mathbb{E}m_{ik}^2+O(n^{-2}),
\end{equation*}
where we used the estimates $\mathbb{E}m_{ik}=\mathbb{E}\mathrm{E}_{ik}=0$ and $\mathbb{E}\mathrm{E}_{ik}^2=O(n^{-2})$.
Therefore, we get 
\begin{equation*}
    p_{i,kk}^2-\mathbb{E}p_{i,kk}^2=(m_{ik}+\mathrm{E}_{ik})^2+2(m_{ik}+\mathrm{E}_{ik})\mathbb{E}p_{i,kk}-\mathbb{E}m_{ik}^2+O(n^{-2}),
\end{equation*}
which further gives
\begin{equation*}
    \begin{split}
        \mathbb{E}(p_{i,kk}^2-\mathbb{E}p_{i,kk}^2)^{2r}\lesssim &\mathbb{E}\big((m_{ik}+\mathrm{E}_{ik})^{4r}+(m_{ik}+\mathrm{E}_{ik})^{2r}(\mathbb{E}p_{i,kk})^{2r}+(\mathbb{E}m_{ik}^2)^{2r}+O(n^{-4r})\big)\\
        \lesssim &O(n^{-r}),
    \end{split}
\end{equation*}
completing the proof of \eqref{eq_pikk22r}. It suffices to consider \eqref{eq_pikkpill}. By the decomposition \eqref{eq_diffpikk}, one can write, for $k\ne \ell$, 
\begin{equation*}
    \begin{split}
        p_{i,kk}p_{i,\ell\ell}-\mathbb{E}p_{i,kk}p_{i,\ell\ell}=&(m_{i\ell}+\mathrm{E}_{i\ell})\mathbb{E}p_{i,kk}+(m_{ik}+\mathrm{E}_{ik})\mathbb{E}p_{i,\ell\ell}\\
        &+(m_{ik}+\mathrm{E}_{ik})(m_{i\ell}+\mathrm{E}_{i\ell})-\mathbb{E}(m_{ik}+\mathrm{E}_{ik})(m_{i\ell}+\mathrm{E}_{i\ell}).
    \end{split}
\end{equation*}
It follows that
\begin{equation*}
    \begin{split}
        \mathbb{E}(p_{i,kk}p_{i,\ell\ell}-\mathbb{E}p_{i,kk}p_{i,\ell\ell})^{2r}\lesssim \mathbb{E}(m_{ik}+\mathrm{E}_{ik})^{2r}\lesssim O(n^{-r}),
    \end{split}
\end{equation*}
completing the proof of this proposition.         \qed

\begin{remark}\label{remark_lowerbound}
    We can show that $\mathbb{E}m_{ik}^2\ge c/n$ for some constant $c>0$ if $i$ is large comparing with $n$. To this end, consider the simplest case $\mathbf{A}=\mathbf{I}$ and notice 
    \begin{equation*}
        \mathbb{E}(w^{\top} \mathbf{M}_{i k} w-\operatorname{tr} \mathbf{M}_{i k})^2\ge \mathbb{E}\operatorname{tr}(\mathbf{M}_{ik}^2)=\mathbb{E}\operatorname{tr}(\mathbf{W}_{i,n-1}^{-2})=\frac{i(n-2)}{(n-i-1)(n-i-2)(n-i-4)},
    \end{equation*}
    where we used the fact that $\mathbb{E}\operatorname{tr}(\mathbf{W}_{i,n}^{-2})=\frac{i(n-1)}{(n-i)(n-i-1)(n-i-3)}$ for $\mathbf{W}_{i,n}\sim \text{Wishart}~(\mathbf{I}_i,n)$. On the other hand, for any $1\le i\le p-1$, we have 
    \begin{equation*}
        \mathbb{E}(\operatorname{tr}\mathbf{M}_{ik}-\mathbb{E}\operatorname{tr}\mathbf{M}_{ik})^2\le \frac{4}{n}.
    \end{equation*}
Thus, by the elementary inequality $(a+b)^2\ge a^2/2-b^2$, one concludes that
\begin{equation*}
    \begin{split}
        \mathbb{E}m_{ik}^2\ge &c\big(\frac{1}{2}\mathbb{E}(w^{\top} \mathbf{M}_{i k} w-\operatorname{tr} \mathbf{M}_{i k})^2-\mathbb{E}(\operatorname{tr}\mathbf{M}_{ik}-\mathbb{E}\operatorname{tr}\mathbf{M}_{ik})^2)\\
        \ge & c\big(\frac{i(n-2)}{2(n-i)^3}-\frac{4}{n}\big)\ge c/n,
    \end{split}
\end{equation*}
if $i\ge 2n/3$ for instance. This also suggests that $\mathbb{E}(p_{i,kk}-\mathbb{E}p_{i,kk})^2\asymp 1/n$ even for the simplest case $\mathbf{A}=\mathbf{I}$.
\end{remark}

\bibliographystyle{elsarticle-harv}
\bibliography{reference}

@article{jiang2025asymptoticaos,
  title={Asymptotic distributions of largest {P}earson correlation coefficients under dependent structures},
  author={Jiang, Tiefeng and Pham, Tuan},
  journal={The Annals of Statistics},
  volume={53},
  number={3},
  pages={907--928},
  year={2025},
  publisher={Institute of Mathematical Statistics}
}

@article{jiang2025asymptoticaossupp,
  title={Supplement to “{A}symptotic distributions of largest {P}earson correlation coefficients under dependent structures”},
  author={Jiang, Tiefeng and Pham, Tuan},
  year={2025},
  publisher={Institute of Mathematical Statistics}
}

@article{heiny2026maximum,
  title={Maximum of sparsely equicorrelated {G}aussian fields and applications},
  author={Heiny, Johannes and Jiang, Tiefeng and Pham, Tuan and Qi, Yongcheng},
  journal={arXiv:2603.05306},
  year={2026}
}

@article{gusakova2023volume,
  title={The volume of random simplices from elliptical distributions in high dimension},
  author={Gusakova, Anna and Heiny, Johannes and Th{\"a}le, Christoph},
  journal={Stochastic Processes and their Applications},
  volume={164},
  pages={357--382},
  year={2023},
  publisher={Elsevier}
}

@book{holgersson2020recent,
  title={Recent developments in multivariate and random matrix analysis},
  author={Holgersson, Thomas and Singull, Martin},
  year={2020},
  publisher={Springer},
address={Cham, Switzerland}
}

@article{pielaszkiewicz2020mixtures,
  title={Mixtures of traces of Wishart and inverse Wishart matrices},
  author={Pielaszkiewicz, Jolanta and Holgersson, Thomas},
  journal={Communications in Statistics-Theory and Methods},
  volume={50},
  number={21},
  pages={5084--5100},
  year={2020},
  publisher={Taylor \& Francis}
}

@article{hu2019high,
  title={High-dimensional covariance matrices in elliptical distributions with application to spherical test},
  author={Hu, Jiang and Li, Weiming and Liu, Zhi and Zhou, Wang},
  journal={The Annals of Statistics},
  volume={47},
  number={1},
  pages={527--555},
  year={2019},
  publisher={JSTOR}
}

@article{onatski2013asymptotic,
  title={Asymptotic power of sphericity tests
for high-dimensional data},
  author={Onatski, Alexei and Moreira, Marcelo J and Hallin, Marc},
  journal={The Annals of Statistics},
  volume={41},
  number={3},
  pages={1204--1231},
  year={2013}
}

@article{girko1998refinement,
  title={A refinement of the central limit theorem for random determinants},
  author={Girko, Vyacheslav L},
  journal={Theory of Probability and Its Applications},
  volume={42},
  number={1},
  pages={121--129},
  year={1998},
  publisher={SIAM}
}

@book{bai2010spectral,
  title={Spectral analysis of large dimensional random matrices},
  author={Bai, Zhidong and Silverstein, Jack W},
  address={New York},
  year={2010},
  publisher={Springer}
}

@article{adamczak2024limit,
  title={Limit theorems for the volumes of small codimensional random sections of $\ell_p^n$-balls},
  author={Adamczak, Rados{\l}aw and Pivovarov, Peter and Simanjuntak, Paul},
  journal={The Annals of Probability},
  volume={52},
  number={1},
  pages={93--126},
  year={2024},
  publisher={Institute of Mathematical Statistics}
}

@article{barany2007central,
  title={Central Limit Theorems for Gaussian Polytopes},
  author={B{\'a}r{\'a}ny, Imre and Vu, Van},
  journal={The Annals of Probability},
  pages={1593--1621},
  year={2007},
  publisher={JSTOR}
}

@article{johnstone2001distribution,
  title={On the distribution of the largest eigenvalue in principal components analysis},
  author={Johnstone, Iain M},
  journal={The Annals of Statistics},
  volume={29},
  number={2},
  pages={295--327},
  year={2001},
  publisher={Institute of Mathematical Statistics}
}

\end{document}